\newtheorem{theo}{Theorem}
\newtheorem{prop}[theo]{Proposition}
\newtheorem{lemm}[theo]{Lemma}
\newtheorem{conj}[theo]{Conjecture}
\theoremstyle{definition}
\newtheorem{rema}{Remark}
\newtheorem{defi}[rema]{Definition}
 \def\NN{{\mathbb N}}  
 \def\RR{{\mathbb R}}  
 \def\ZZ{{\mathbb Z}}
\def\La{\Lambda}
  \def\cG{{\cal G}}  
\def\cC{{\cal C}}   \def\cO{{\cal O}} \def\cU{{\cal U}}
   \def\cP{{\cal P}} 
    \def\cW{{\cal W}}
   \def\cR{{\cal R}}
\newcommand{\diff}{\operatorname{Diff}}
\title{
The $C^{1+\alpha }$ hypothesis in Pesin theory \emph{revisited}}
\author{Christian Bonatti, Sylvain Crovisier and Katsutoshi Shinohara}
\begin{document}

\maketitle

\begin{abstract}
We show that for every compact $3$-manifold $M$ there exists an open subset of $\diff ^1(M)$
in which every generic  diffeomorphism admits uncountably many ergodic probability measures 
which are hyperbolic while their supports are disjoint and
admit a basis of attracting neighborhoods and a basis of repelling neighborhoods.
As a consequence, the points in the support of these measures have no stable and no unstable manifolds.
This contrasts with the higher regularity case, 
where Pesin theory gives us the stable and the unstable manifolds 
with complementary dimensions at almost every point.
We also give such an example in dimension two, without local genericity.

{ \medskip
\noindent \textbf{Keywords:} Pesin theory, wild diffeomorphism, dominated splitting, Lyapunov exponents. 

\noindent \textbf{2010 Mathematics Subject Classification:} Primary: 37D25,  37D30. Secondary: 	37C70.  }

\end{abstract}

\section{Introduction}

Pesin theory \cite{Pe} is a strong tool to study the 
hyperbolic behavior of non-uniformly hyperbolic 
systems. It describes the local dynamics along orbits which are individually hyperbolic
(in the sense that they have well defined Lyapunov exponents which are all non vanishing): 
for any point of such an orbit, the stable and unstable sets are immersed submanifolds with complementary dimensions.

The original proof was carried out under the assumption that 
the dynamics is of regularity $C^{1+\alpha}$. 
It is important to understand 
if such a regularity assumption is essential or not. 
Pugh (see \cite{Pu} \emph{The $C^{1+\alpha }$ hypothesis in Pesin theory}) gave an example of a $C^1$-diffeomorphism on a $4$-manifold having an 
orbit with non-zero well defined Lyapunov exponents but no invariant manifolds. 
Meanwhile, it is also known that Pesin theory can be valid in the $C^1$ setting 
under extra hypotheses, see for instance \cite{BaVa}.
More recently, \cite{ABC} proved that Pesin theory
works in the $C^1$ setting under the existence of a dominated splitting. 

It is interesting to know under what setting Pesin theory can be generalized to  $C^1$ dynamics.
For instance:
\begin{itemize}
 \item Pugh's counter-example gives an explicit orbit, but not a generic point of a hyperbolic measure\footnote{Pugh's example 
 is partially hyperbolic and all center Lyapunov exponents are  negative. As in  \cite{ABC}, the stable/unstable splitting is dominated.
 In Pugh's example the assumption on the Lyapunov exponents means that the norm of the center derivative of large iterates  
 decreases exponentially, whereas in \cite{ABC} the product of the norm of the center derivative along the orbit decreases exponentially; 
 for generic points of measures, these two conditions are equivalent.}. 
It is therefore natural to ask if such an example naturally appears as a regular point of some hyperbolic measures.
 \item Even if the answer of the first question is \emph{No} in full generality, it is natural to ask if  Pesin theory holds for 
 $C^1$-generic diffeomorphisms. 
\end{itemize}

 In  \cite{Pu} it is conjectured:
 \begin{conj}[Pugh]\label{c.pugh}
  If $Orb(p)$ is an orbit with well defined non-vanishing Lyapunov exponents of a $C^1$-diffeomorphism $ f\colon M\to M$,
where $M$ has dimension two,
and $\dim(E^s) = \dim(E^u)=1$, then Pesin's result holds: $W^s(p)$ is a $C^1$-curve
tangent at the point $p$ to $E^s$.

Indeed this might be true (on manifolds of  any dimension) whenever $E^s$ has dimension one.
\emph{Regularity}\footnote{By \emph{regularity} Pugh  means  that the Lyapunov exponent is given by 
the exponential rate of the product of the norm of 
the derivative (up to replace $f$ by a large finite iterate)}   is automatic on one-dimensional subspaces.
 \end{conj}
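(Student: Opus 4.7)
The plan is to imitate the classical Pesin construction of the local stable manifold as an invariant graph over $E^s$, exploiting that $\dim E^s=1$ to bypass the Hölder estimates that fail in pure $C^1$. Along the orbit, I would choose coordinate charts $\phi_n$ around $f^n(p)$ adapted to the Oseledets splitting and equipped with a Lyapunov (tempered) metric, so that the derivative of the conjugated map $f_n=\phi_{n+1}\circ f\circ\phi_n^{-1}$ at the origin is block diagonal and uniformly hyperbolic: acting on the straightened 1-dimensional stable direction by scalar multiplication by some $a_n$ with $|a_n|\le\sigma<1$, and (in dimension two) on the unstable direction by $b_n$ with $|b_n|\ge\tau>1$. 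The chart radii $r_n$ decrease at most subexponentially. Writing $f_n(x)=Df_n(0)x+g_n(x)$, we have $g_n(0)=0$ and $Dg_n(0)=0$.

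Next I would set up a stable cone family $\cC^s_n=\{(u,v)\colon |v|\le \rho|u|\}$ and try to prove that $(Df_n(x))^{-1}\cC^s_{n+1}\subset \cC^s_n$ strictly, uniformly for $x$ in the chart ball. Granted such cone invariance, a standard graph transform on the space of Lipschitz graphs over the 1-dimensional stable direction yields, by a fixed-point argument in the sequence space indexed by $n$, an invariant Lipschitz curve $W^s_{loc}(p)$ through $p$ whose slope lies in $\cC^s_0$. The one-dimensionality of $E^s$ is used in an essential way here: there is no off-diagonal coupling within $E^s$ and the induced action on the stable direction is exactly scalar, so the graph transform contracts with rate $|a_n|/\tau$ on the nose without any Oseledets regularity issues.

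To upgrade the resulting Lipschitz curve to a $C^1$ curve tangent to $E^s(p)$ at $p$, I would use the hypothesis that the Lyapunov exponent $\lambda^s$ is well defined along $Orb(p)$. For any $q\in W^s_{loc}(p)$, the iterates satisfy $\tfrac{1}{n}\log d(f^n(q),f^n(p))\to\lambda^s$, and the scalar nature of the $E^s$-action forces, in the chart at $p$, the ratio $\gamma(x)/x\to 0$ as $x\to 0$ along the graph $\gamma$ of $W^s_{loc}(p)$, yielding tangency. Repeating the argument at each $f^n(p)$ and invoking the invariance of the graph gives $C^1$ regularity along the orbit.

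The hard part—indeed presumably the reason the conjecture is still open—is the uniform cone invariance step. In $C^{1+\alpha}$ one obtains $\operatorname{Lip}(g_n|_{B_\delta})\lesssim \delta^\alpha$, and Hölder continuity dominates the subexponential distortion of the tempered metric. In pure $C^1$ the modulus of continuity of $Df$ is only qualitative, so even though one has control \emph{at each fixed scale}, one cannot \emph{a priori} rule out that the effective Lipschitz constant of $g_n$ on the (possibly shrinking) chart balls degenerates as $n\to\infty$. A potential route would be a bootstrapping argument: first extract a very weak (say, topological or Hölder) invariant curve by an abstract compactness/cone argument using only 1-dimensionality, and then use invariance together with the scalar action on $E^s$ to amplify the regularity. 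Making any such amplification effective for arbitrary $C^1$ dynamics, without a dominated splitting hypothesis of the kind used in \cite{ABC}, is the central obstacle.
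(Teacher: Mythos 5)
You have tried to \emph{prove} Pugh's conjecture, but the statement you were handed is a conjecture that this very paper \emph{refutes}. Theorem~\ref{t.pugh} constructs, on any compact surface, a $C^1$-diffeomorphism with a chain recurrent class $\cC$ conjugate to an adding machine (hence uniquely ergodic), whose invariant measure $\mu$ is hyperbolic, and such that $\cC$ is a decreasing intersection of alternating attracting and repelling regions; consequently $W^s(x)=W^u(x)=\{x\}$ for every $x\in\cC$. For $\mu$-a.e.\ $p\in\cC$, the orbit has well-defined non-vanishing Lyapunov exponents with $\dim E^s=\dim E^u=1$, yet $W^s(p)$ is a single point, not a $C^1$ curve. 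This is a direct counterexample to Conjecture~\ref{c.pugh}. (The second, higher-dimensional clause of the conjecture is likewise refuted by Theorem~\ref{t.conceptual} applied to $f^{-1}$, for which $\dim E^s=1$ along the measure's support.)

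Your proposal correctly locates where the classical Pesin graph-transform breaks in pure $C^1$: the cone-invariance estimate requires a quantitative modulus of continuity for $Df$ (a H\"older bound) to dominate the subexponential distortion of the tempered metric, and $C^1$ regularity alone provides none, so the effective Lipschitz constant of the nonlinear remainder on shrinking Lyapunov charts can degenerate along the orbit. But you then frame this as a technical obstacle, calling it ``presumably the reason the conjecture is still open'' and proposing a bootstrapping amplification to close the gap. That framing is the error. The obstacle is not a gap in the argument; it is a genuine failure of the statement. The mechanism of the counterexample (Proposition~\ref{p.pugh} iterated, in the spirit of Theorem~\ref{t.viral}: start from a saddle with zero Jacobian along its orbit, exponents bounded away from zero, and a homoclinic tangency; produce by a $C^1$-small perturbation a higher-period saddle with the same features trapped inside both an attracting and a repelling periodic family of small disks; iterate) exploits precisely the degeneration you flagged: the nested attracting/repelling regions force the stable and unstable sets of every point of the limiting adding machine to be trivial. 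One-dimensionality of $E^s$ does not rescue the graph transform, and no bootstrapping from a weak invariant curve can succeed, because there is no invariant curve at all.
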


In this article, we show that there are some limitations 
for the $C^1$-Pesin theory, by giving a negative answer to both questions above, and to Pugh's Conjecture~\ref{c.pugh}. 
More precisely, we will show the following.

\begin{theo}\label{t.conceptual}
Let $M$ be a smooth compact manifold of dimension three.
We furnish $\diff^1(M)$ with the $C^1$-topology.
Then there exists a non-empty open set 
$\mathcal{U} \subset \diff^1(M)$
and a dense $\mathrm{G}_\delta$ subset $\mathcal{R} \subset
\mathcal{U}$ such that every $f \in \mathcal{R}$ admits
a hyperbolic ergodic probability measure $\mu$
such that every point in the support of $\mu$ has trivial stable and unstable sets.
\end{theo}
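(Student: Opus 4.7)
The plan is to produce an open set $\cU \subset \diff^1(M)$ inside which $C^1$-generic diffeomorphisms carry compact invariant sets that are simultaneously quasi-attractors and quasi-repellers (i.e.\ admit a basis of attracting neighborhoods \emph{and} a basis of repelling neighborhoods), and then to support hyperbolic ergodic measures on such sets. The triviality of stable and unstable sets at every point of the support will then be a direct consequence of the double attracting/repelling character.

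I would build $\cU$ from a partially hyperbolic model on $M$ with one-dimensional center and a dominated splitting $E^{ss}\oplus E^c\oplus E^{uu}$, engineered via blender-like mechanisms and robust heterodimensional cycles so that periodic orbits of stable index $1$ and of stable index $2$ coexist and accumulate on each other. Using the $C^1$-perturbation toolbox --- Hayashi's connecting lemma and its extension to pseudo-orbits by Bonatti--Crovisier, applied to both $f$ and $f^{-1}$ --- I would then show that for every $f$ in a dense $G_\delta$ subset $\cR\subset\cU$ there exist uncountably many chain recurrence classes, each admitting both a basis of attracting neighborhoods (from a quasi-attractor structure for $f$) and a basis of repelling neighborhoods (from the same for $f^{-1}$). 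The uncountability should come from the generic fragmentation of the wild region into a Cantor family of pairwise disjoint classes.

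On each such class $K$, I would construct a hyperbolic ergodic measure $\mu$ with $\operatorname{supp}(\mu)\subseteq K$. The partial hyperbolicity supplies the extreme non-zero Lyapunov exponents, and the one-dimensional center allows me to produce a non-zero central exponent by approximating the class with periodic orbits of a chosen index and extracting an ergodic limit, using the $C^1$-Pesin framework under domination of [ABC] to ensure the hyperbolic behaviour is preserved in the limit. Triviality of $W^s(x)$ and $W^u(x)$ for $x\in\operatorname{supp}(\mu)$ then follows essentially for free: a basis of repelling neighborhoods forces $W^s(x)\subset K$, a basis of attracting neighborhoods forces $W^u(x)\subset K$, and the combined structure on $K$ itself prevents two distinct points from being in an asymptotic relation.

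The main obstacle --- presumably the bulk of the paper --- is the construction of $\cU$ together with the proof that uncountably many chain recurrence classes are simultaneously quasi-attractors and quasi-repellers for $C^1$-generic $f\in\cU$. This requires a delicate interplay between robust blender/heterodimensional constructions and fine $C^1$-perturbation arguments ensuring that generic perturbations preserve both attracting and repelling characters at once.
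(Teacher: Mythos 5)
Your starting hypothesis is exactly the one that cannot hold. You propose to work in a partially hyperbolic region with a dominated splitting $E^{ss}\oplus E^c\oplus E^{uu}$ and then invoke the $C^1$-Pesin theory under domination of~\cite{ABC}. But that result is precisely what makes your plan self-defeating: once the extreme bundles are uniformly contracted/expanded (or even once there is a dominated splitting separating the stable and unstable Oseledets spaces), every point of the support of a hyperbolic measure has nontrivial stable and unstable manifolds tangent to the stable/unstable bundles --- this already follows from the $C^1$ stable manifold theorem for (partially) hyperbolic sets, and is exactly the mechanism by which~\cite{ABC} rescues Pesin theory in the $C^1$ category. So the sets you build would never have trivial stable/unstable sets, and your approach would disprove the theorem you are trying to prove.

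The paper goes in the opposite direction: the whole construction lives in a region where domination is \emph{robustly absent} along the relevant periodic orbits. Property $\cP$ forces this via periodic points with complex stable eigenvalues, complex unstable eigenvalues (arranged through a robust heterodimensional cycle), and the presence of homoclinic tangencies. It is this lack of any $k$-dominated splitting that lets one apply Ma\~n\'e/Gourmelon-type $C^1$-perturbations turning a carefully chosen periodic saddle $p'$ into a sink \emph{and} into a source (since $Jf(\cO(p'))$ is made to vanish, using the two companion periodic points $p^\pm$ with Jacobians of opposite sign). This produces nested attracting disks $A_n$ and repelling disks $R_n$ with diameters going to $0$, whose intersection is an adding machine; the hyperbolicity of the unique invariant measure is then obtained not from any invariant splitting but from upper semicontinuity of the extreme Lyapunov exponent (Lemma~\ref{l.limit}) plus the Jacobian bound $Jf<1$ which forces $\chi^c<0$. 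Two secondary remarks: (i) the triviality of stable/unstable sets is not obtained merely because $W^s(x)\subset K$; one needs the diameters of the connected components of $A_n$, $R_n$ to tend to $0$, which is built into the inductive construction; (ii) the perturbation toolbox used is Franks' lemma, the tangency-creation results of Gourmelon, and the periodic-orbit selection lemma of~\cite{BCDG}/\cite{ABCDW}, rather than the connecting lemma machinery you invoke.
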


Our construction is based on bifurcations generated by non-dominated robust cycles
between periodic orbits whose differential satisfy some bounds.
We think that the phenomenon we give can be described in more general fashion. 
However, in this paper we concentrate on producing one specific example
so that the essence of the argument will be presented more clearly.

\begin{rema}\label{r.conceptual}
 In the open set $\cU$ that we will build, any $C^1$-generic diffeomorphism admits uncountably many
 hyperbolic ergodic measures $\mu$ with trivial stable an unstable sets, and with pairwise disjoint support which are Cantor sets.  
The dynamics on each of these support is a \emph{generalized adding machine}
(also called \emph{odometer} or \emph{solenoid},
see for example \cite{BS} or \cite{MM} for the definition) and therefore is uniquely ergodic.

Furthermore, each of these supports
admits a basis of attracting neighborhoods and a basis of repelling neighborhoods: 
they are \emph{chain-recurrent classes} of the dynamics without periodic point, 
which were called \emph{aperiodic classes} in~\cite{BC}. 
\end{rema}

The construction above can be generalized in any dimension $\geq 3$; in dimension $2$ there are no example of locally generic 
non hyperbolic diffeomorphism.  We therefore only give a non-generic example. 

\begin{theo}\label{t.pugh} Any smooth compact surface admits a $C^1$-diffeomorphism and a chain recurrent 
class $\cC$ which is conjugated to an adding machine, (hence uniquely ergodic) such that the measure supported on $\cC$ in hyperbolic. 
Furthermore, $\cC$ is the intersection of a nested sequence of successively 
attracting and repelling regions: in particular, the stable and unstable sets of any point $x\in \cC$ are equal to $\{x\}$.
\end{theo}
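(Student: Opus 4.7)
The plan is to build the required diffeomorphism $f$ by a renormalization-type inductive construction in the $C^1$ topology, performing a period-doubling cascade inside a disk. Fix an embedded closed disk $D_0 \subset M$ and start with $f_0$ that is arbitrary outside $D_0$ and is a linear contraction on $\overline{D_0}$ sending it strictly into $\mathrm{int}(D_0)$, so that $U_0 = D_0$ is an attracting trapping region. Inductively, suppose $f_n$ has been built together with nested regions $U_0 \supset U_1 \supset \cdots \supset U_n$, where each $U_m = \bigsqcup_{k=0}^{2^m-1} D_m^{(k)}$ is a disjoint union of $2^m$ topological disks cyclically permuted by $f$, and $U_m$ is attracting ($f(\overline{U_m})\subset \mathrm{int}(U_m)$) for even $m$ and repelling ($f^{-1}(\overline{U_m})\subset \mathrm{int}(U_m)$) for odd $m$. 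I would construct $f_{n+1}$ from $f_n$ by a $C^1$-small perturbation supported in a thin neighborhood of the $f$-orbit of $D_n^{(0)}$: inside $D_n^{(0)}$ pick two small disjoint disks $D_{n+1}^{(0)}, D_{n+1}^{(2^n)}$ and modify the first-return map $f_n^{2^n}|_{D_n^{(0)}}$ by a local bifurcation so that it acquires a period-$2$ orbit inside $D_{n+1}^{(0)}\cup D_{n+1}^{(2^n)}$, while the new union $U_{n+1}$ has the \emph{opposite} trapping character to $U_n$; then translate this perturbation along the $f$-orbit to obtain the full brick system.

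For the hyperbolicity of the limit measure, at each step I would use the freedom in the perturbation (via a Franks-type argument spread along the orbit) to prescribe the eigenvalues of $Df_{n+1}^{2^{n+1}}$ at the new periodic orbit $O_{n+1}$. At level $m$ I would choose eigenvalues $(a_m,b_m)$ both of modulus $<1$, resp.\ both $>1$, according to the trapping character, but with $\tfrac{1}{2^m}\log|a_m|\to -\alpha$ and $\tfrac{1}{2^m}\log|b_m|\to +\beta$ for some $\alpha,\beta>0$, and arrange that the eigenframes rotate by roughly $\pi/2$ between consecutive levels. In the $C^1$ setting nothing forces a dominated splitting, so the Oseledets exponents along generic orbits of the unique invariant measure $\mu$ emerge with opposite signs, making $\mu$ hyperbolic. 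The $C^1$-sizes of the perturbations are taken summable in $n$, so $f=\lim f_n$ exists in $C^1$; the Cantor set $\cC=\bigcap_n \overline{U_n}$ supports a uniquely ergodic $f$-action conjugate to the $2$-adic adding machine, and by construction it is the nested intersection of alternately attracting and repelling regions.

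For triviality of the invariant sets, equicontinuity of the adding machine forces $d(f^k(x),f^k(y))$ to be bounded below for $x\neq y$ both in $\cC$ and all $k\in\ZZ$, so $y\notin W^s(x)\cup W^u(x)$; for $y\in U_0\setminus \cC$ one has $y\in U_n\setminus U_{n+1}$ for some $n$, and the alternating trapping structure implies that either the forward or the backward orbit of $y$ remains in $U_n\setminus U_{n+1}$ and is therefore uniformly bounded away from $\cC$, while the opposite end, if it accumulates on $\cC$, cannot accumulate on any single $\cC$-orbit by minimality — in either case $y\notin W^s(x)\cup W^u(x)$. The main obstacle is the derivative control at Step~2: one must simultaneously force both eigenvalues of $Df^{2^n}|_{O_n}$ to lie on the same side of $1$ (imposed by the topological trapping) while making the limit Lyapunov exponents of opposite sign. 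This is exactly the point where $C^1$-Pesin theory fails in the absence of dominated splitting, and the heart of the construction is to quantify the inter-level rotation of eigenframes so that products of uniformly contracting (or uniformly expanding) matrices yield an Oseledets cocycle with hyperbolic exponents; once this quantitative bookkeeping is in place, the remaining verifications (convergence in $C^1$, unique ergodicity, triviality of $W^{s/u}$) are routine.
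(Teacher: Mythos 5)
There is a decisive gap in your Step~2, and it concerns the very point you flagged as the ``main obstacle.'' You claim that the topological trapping character of $U_n$ forces both eigenvalues of $Df^{2^n}$ at $O_n$ to lie on the same side of $1$, i.e.\ forces $O_n$ to be a sink or a source. This is false: a topological disk $D$ with $f^N(D)\subset\operatorname{Int}(D)$ may perfectly well contain a hyperbolic fixed saddle of $f^N$ — the local unstable manifold simply stays inside $D$, wrapping around. Once you impose the (unnecessary and harmful) constraint that $O_n$ is a sink or source, two things break. First, your own eigenvalue scheme becomes self-contradictory: if both eigenvalues of $Df^{2^m}|_{O_m}$ have modulus $<1$, then both $\tfrac{1}{2^m}\log|a_m|$ and $\tfrac{1}{2^m}\log|b_m|$ are negative, so neither can converge to $+\beta>0$; symmetrically for sources. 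Second, and more fundamentally, you lose any mechanism to prove hyperbolicity of the limit measure $\mu$. The semi-continuity that is available (and that the paper relies on, via the subadditive bound $\chi^+\le \tfrac1n\int\log\|Df^n\|\,d\mu$) gives $\chi^+(\mu)\ge\limsup_n\chi^+(\mu_n)$ and $\chi^-(\mu)\le\liminf_n\chi^-(\mu_n)$. If the $\mu_n$ are sinks this gives $\chi^+(\mu)\ge$ (something negative) — vacuous; if sources, $\chi^-(\mu)\le$ (something positive) — vacuous. Your appeal to ``eigenframe rotation'' making ``products of uniformly contracting matrices yield a hyperbolic cocycle'' is precisely the statement that needs proof, and you give no argument; semi-continuity works against you, not for you, in this configuration.

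The paper's route is different exactly here, and it is the crux of the whole construction. At each level the periodic orbit $p'$ is kept a hyperbolic \emph{saddle}, with $\chi^+(p')>1$, $\chi^-(p')<-1$, Jacobian $J(\cO(p'))$ essentially zero, and no dominated splitting (obtained by unfolding a homoclinic tangency and selecting a well-distributed orbit in the resulting horseshoe). Then the absence of domination together with $J(\cO(p'))\approx 0$ allows one, following Ma\~n\'e, to perturb the first-return map in an annular region around $\cO(p')$ so as to create \emph{both} an attracting and a repelling trapping region surrounding the orbit, while leaving the linearization at $\cO(p')$ untouched — so $p'$ remains a saddle with the prescribed exponents, inside the nested regions. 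Finally one regenerates a homoclinic tangency for $p'$ (Gourmelon's theorem) and readjusts $J(\cO(p'))=0$, restoring the inductive hypothesis. With saddles of uniformly large exponents at every level, the semi-continuity inequalities $\chi^+(\mu)\ge\limsup\chi^+(p_n)\ge 1$ and $\chi^-(\mu)\le\liminf\chi^-(p_n)\le -1$ immediately give hyperbolicity of the unique measure on the Cantor set. Your framework (nested alternating trapping regions, adding-machine structure, triviality of $W^{s/u}$, $C^1$-summable perturbations) is compatible with this; but you must drop the sink/source constraint, keep saddles at the centers, and use lack of domination plus vanishing Jacobian — not eigenframe rotation — as the engine that produces both trapping regions while preserving the large saddle exponents.
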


In the rest of this paper, we give the proof of Theorem~\ref{t.conceptual}. 
The statements of Remark~\ref{r.conceptual} will be proved inside 
the proof of Theorem~\ref{t.conceptual} except the
point that there appear uncountably many such ergodic measures,
which will be explained in Section~\ref{s.uncountable}.
Theorem~\ref{t.pugh} will be proved in Section~\ref{s.pugh}. 

\section{Hyperbolic adding machines with trivial stable/unstable sets}

Let us review some basic definitions and facts.
Let $f$ be a $C^1$-diffeomorphism on a compact smooth manifold $M$ of dimension $N$
with a Riemannian metric.
The \emph{stable set} $W^s(x)$ of  a point $x\in M$ is the set of the 
points whose orbit is asymptotic to the one of $x$, that is
\[
W^s(x) := \{ y \in M \mid d(f^n(x), f^n(y)) \to 0 \text{ as } n \to +\infty \},
\]
where $d$ denotes some distance function on $M$. 
The unstable set $W^u(x)$ is the stable set of $x$ for $f^{-1}$.
\smallskip

Let $\mu$ be an $f$-invariant ergodic probability measure on $M$.
Its support, denoted by $\mathrm{supp}(\mu)$, is the smallest closed set in $M$ which has full $\mu$-measure.
Oseledets' theorem provides us with the \emph{Lyapunov exponents} $\chi_1\geq \dots\geq \chi_N$ of $\mu$:
at $\mu$-a.e. point $x\in M$ and for every $i$ with $1\leq i\leq N$, 
the set
$$E_i =\{v\in T_xM \mid \text{if $v\neq 0$ then } \frac 1 n \log \|Df^n(x)(v)\|\to \chi_i \text{ as } n\to \pm \infty\},$$
forms a vector space whose dimension is equal to the multiplicity 
of $\chi_i$ in the sequence $\chi_1,\dots,\chi_N$. 
An $f$-invariant ergodic measure is called \emph{hyperbolic} if every Lyapunov exponent of it is non-zero.

Remember that the largest Lyapunov exponent $\chi_1$ also satisfies the following (see for instance~\cite{L});
\begin{equation}\label{e.limitexponent}\tag{L1}
\chi_1 =\lim_{n\to +\infty} \frac 1 n \int \log\|Df^n(x)\|d\mu(x).
\end{equation}
Note that by sub-additivity of the sequence $n\mapsto \int\log\|Df^n(x)\|d\mu(x)$, we have for each $n\geq 1$
\begin{equation}\label{e.inequalityexponent}\tag{L2}
\chi_1\leq \frac 1 n \int \log\|Df^n(x)\|d\mu(x).
\end{equation}
If we define the continuous map $Jf(x) := \log |\det(Df(x))|$ and set
$Jf(\mu) :=\int Jf(x)d\mu(x)$, then we have (again see~\cite{L}):
\begin{equation}\label{e.jacob}\tag{L3}
Jf(\mu)=\chi_1+\dots+\chi_N.
\end{equation}
In dimension $3$, we also denote by $\chi^-(\mu)\leq \chi^c(\mu)\leq \chi^+(\mu)$ the Lyapunov exponents of
the measure $\mu$. If $p$ is a periodic point we denote by
$\chi^-(p),\chi^c(p),\chi^+(p)$ the Lyapunov exponents
of the ergodic invariant probability measure supported evenly along the orbit of $p$ and 
by $Jf(\cO(p))$ their sum. In dimension $2$, we also use similar notations omitting $\chi^c$.
\medskip

Our construction is detailed by the following.
\begin{theo}\label{t.technical}
Given any compact $3$-manifold $M$, there is
a non-empty open set $\mathcal{U} \subset \diff^1(M)$
and a dense $\mathrm{G}_\delta$ subset $\mathcal{R} \subset \mathcal{U}$ such that for every $f \in \mathcal{R}$,
there exist two sequences of compact regions $(A_n)$, $(R_n)$ $(n \in \mathbb{N})$
and a sequence of hyperbolic periodic points $(p_n)$ satisfying the following properties:
\def\labelenumi  {(\ref{t.technical}-\theenumi)}
\begin{enumerate}
\item\label{i.t1} $R_{n+1}\subset A_{n+1}\subset R_n$ for 
every $n \in \mathbb{N}$.
\item\label{i.t2} $A_n$ is a disjoint union of $m_n$ disks, that is, 
$A_n = \coprod_{i=0}^{m_n-1} D_{n, i}$ where
$D_{n, i}$ are $C^1$-disks in $M$ such that $f(D_{n, i}) \subset \mathrm{Int}(D_{n, i+1})$ 
for every $i \in \ZZ/(m_n\ZZ)$
(\,$\mathrm{Int}(X)$ denotes the (topological) interior of $X$).
\item\label{i.t3} $R_n$ is a disjoint union of $m_n$ disks, that is,
$R_n = \coprod_{i=0}^{m_n-1} E_{n, i}$ where
$E_{n, i}$ are $C^1$-disks in $M$ such that  $f^{-1}(E_{n, i}) \subset \mathrm{Int}(E_{n, i+1})$ for every $i \in \ZZ/(m_n\ZZ)$.
\item\label{i.t4} $\max_{i \in \mathbb{Z}/(m_n\mathbb{Z})} \mathrm{diam}(D_{n, i})  \to 0$  (as $n \to +\infty$), where
$\mathrm{diam}(X)$ denotes the diameter of $X$.
\item\label{i.t5} $m_{n+1}>m_n$ for every $n \in \mathbb{N}$.
\item\label{i.t6} $Jf(x) < 1$ and $\log\|Df^{-1}(x)\|<2$ for every $x\in A_1$.
\item\label{i.t7} We have $p_n\in R_n$. The largest and smallest Lyapunov exponents satisfy
$\chi^+(p_n)>3$ and $\chi^-(p_n)<-1$.
\end{enumerate}
\end{theo}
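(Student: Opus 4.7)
My plan is to obtain $\mathcal{U}$ as a $C^1$-open neighborhood of a carefully chosen diffeomorphism $f_0$ carrying a robust non-dominated heterodimensional cycle, and then to produce the triples $(A_n,R_n,p_n)$ by an inductive $C^1$-perturbation argument combined with Baire's theorem. For the starting $f_0$, I would take a Bonatti--D\'{\i}az type construction supported in a small ball $B\subset M$, with two hyperbolic periodic saddles of opposite stability index robustly linked by a heterodimensional cycle, uniformly arranged so that on $B$ one has $Jf_0<1$ and $\log\|Df_0^{-1}\|<2$, and so that the extremal Lyapunov exponents of the two saddles are well above the thresholds in (\ref{i.t7}). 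Defining $\mathcal{U}$ to be a small $C^1$-neighborhood of $f_0$ in which these strict estimates persist on $B$ makes (\ref{i.t6}) automatic as soon as $A_1\subset B$, and preserves the robust cycle throughout $\mathcal{U}$.

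For each $n$ I would introduce $\mathcal{V}_n\subset\mathcal{U}$, the set of $f$ admitting data $(A_k,R_k,p_k)_{k\le n}$ satisfying (\ref{i.t1})--(\ref{i.t7}) up to level $n$ with the additional quantitative requirement $\max_i\operatorname{diam}(D_{n,i})<1/n$. Each $\mathcal{V}_n$ is open because the estimates in (\ref{i.t6})--(\ref{i.t7}) are strict, the inclusions (\ref{i.t1})--(\ref{i.t3}) involve only finitely many compact disks and iterates of $f$, and hyperbolic periodic points persist under $C^1$-perturbations. Since $\mathcal{V}_{n+1}\subset\mathcal{V}_n$ by definition, it suffices to show that each $\mathcal{V}_n$ is dense in $\mathcal{U}$; Baire then yields $\mathcal{R}=\bigcap_n\mathcal{V}_n$ residual in $\mathcal{U}$, and diagonal extraction produces the infinite sequences for every $f\in\mathcal{R}$.

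The core step is therefore the inductive enrichment from $\mathcal{V}_n$ to $\mathcal{V}_{n+1}$: given $f\in\mathcal{V}_n$, I want an arbitrarily small $C^1$-perturbation $\tilde f\in\mathcal{U}$ that preserves the level-$n$ data and realizes level $n+1$. I would localize inside $R_n$ a copy of the robust cycle of $\mathcal{U}$ via Hayashi's connecting lemma, producing a heterodimensional cycle between $p_n$ and an auxiliary saddle $q\subset R_n$ with controlled spectrum, and unfold it in the spirit of Bonatti--D\'{\i}az to produce a periodic orbit $p_{n+1}\subset R_n$ of very large period $m_{n+1}$ whose extremal Lyapunov exponents still satisfy (\ref{i.t7}), thanks to the uniform control on $Df$ in $\mathcal{U}$. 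A thin tubular chain of $m_{n+1}$ small disks around $\mathcal{O}(p_{n+1})$ would then form $A_{n+1}$ (attracting because two transverse Lyapunov exponents of $p_{n+1}$ are very negative, forced by the volume bound in (\ref{i.t6}) together with $\chi^+(p_{n+1})>3$), and a thinner concentric chain along the strong unstable direction of $p_{n+1}$ would give the repelling $R_{n+1}$.

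The main obstacle is this last perturbation step. The tension between the global volume bound (\ref{i.t6})---which forces $\chi^-(p)+\chi^c(p)+\chi^+(p)<1$ along every periodic orbit in $A_1$---and (\ref{i.t7}) pushes $\chi^c(p_n)$ to be strongly negative, and this is exactly the room one needs to inscribe a repelling chain inside an attracting chain. Arranging a single $C^1$-perturbation so that the new orbit $p_{n+1}$ has this precise hyperbolic signature, supports both nested chains of disks, has diameter bounded by $1/(n+1)$, and does not destroy the earlier data $(A_k,R_k,p_k)_{k\le n}$, is the delicate part; the rest then follows from robustness of hyperbolic structures and from Baire's theorem.
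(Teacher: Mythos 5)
Your high-level strategy (start from a robust non-dominated cycle, define a nested family of open sets in $\mathcal U$, prove they are dense, take a residual intersection) matches the paper's scheme of iterating Theorem~\ref{t.viral} to build the sets $\mathcal G_n$. However, the mechanism you propose for the crucial step---simultaneously inscribing an attracting chain $A_{n+1}$ and a repelling chain $R_{n+1}$ around the new periodic orbit $\mathcal O(p_{n+1})$---does not work as stated, and this is precisely where the actual difficulty lies.

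You write that $A_{n+1}$ is attracting ``because two transverse Lyapunov exponents of $p_{n+1}$ are very negative,'' and that a thinner chain gives the repelling $R_{n+1}$. This cannot be correct for a hyperbolic \emph{saddle}: since $\chi^+(p_{n+1})>3$ and $\chi^-(p_{n+1})<-1$, the orbit has non-trivial stable and unstable manifolds, so no small neighborhood of $\mathcal O(p_{n+1})$ can be attracting (points on $W^u$ escape), nor repelling. The correct mechanism, used in the paper's Step~II, is a \emph{sequence of local perturbations}: because the orbit $\mathcal O(p')$ has been arranged (via the barycenter argument of~\cite{ABCDW}, using the saddles $p^\pm$ of opposite Jacobian sign, plus Franks' lemma) to have $Jf_1(\mathcal O(p'))=0$ and no $k$-dominated splitting, one can perturb $f_1$ near $\mathcal O(p')$ to turn the orbit into a \emph{sink}, producing the attracting chain $A$; then one recovers the original derivative in a smaller neighborhood, re-perturbs there to make the orbit a \emph{source}, producing a repelling chain $R\subset A$; and then one recovers the original derivative again in a still smaller neighborhood. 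The Lyapunov exponents of the final saddle $p'$ inside $R$ are those of $f_1$, not of the intermediate sink/source. This recovery step is essential also because the absence of $k$-domination along $\mathcal O(p')$ must survive to feed the later steps (Gourmelon's creation of tangencies, production of $p'^\pm$ and of the complex-eigenvalue saddles needed to reproduce property~$\mathcal P$). Your proposal also invokes Hayashi's connecting lemma and omits the barycenter/Franks control of the Jacobian, neither of which matches the paper; the paper instead uses the structure of the filtrating set $U$ and the robust cycle to locate $p'$ inside $U$, and needs the explicit pair $p^\pm$ (condition~($\mathcal P$-\ref{i.p3})) to reach $J=0$. Without some version of the ``perturb to sink, recover, perturb to source, recover'' argument, the step from $\mathcal V_n$ to $\mathcal V_{n+1}$ has no proof.
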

Theorem~\ref{t.conceptual} follows immediately from Theorem~\ref{t.technical} and the following proposition:

\begin{prop}\label{p.intersection}
Consider a diffeomorphism and sequences $(R_n)$, $(A_n)$ and $(p_n)$
satisfying all the properties announced in Theorem~\ref{t.technical}. Then the intersection
$\cC=\bigcap_n A_n=\bigcap_n R_n$ is a Cantor set on which the restriction of $f$ is minimal and
uniquely ergodic (indeed it is a generalized adding machine).

The invariant probability measure supported on $\cC$ is hyperbolic, and 
every point in the support of it has trivial stable and unstable sets.
\end{prop}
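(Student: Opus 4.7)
First I would analyze the topological structure of $\cC$. The inclusions in~(\ref{i.t1}) give $\bigcap_n A_n=\bigcap_n R_n$, and combining $f(A_n)\subset A_n$ from~(\ref{i.t2}) with $f^{-1}(R_n)\subset R_n$ from~(\ref{i.t3}) makes $\cC$ $f$-invariant. To identify $f|_\cC$ as an adding machine, I would compare $A_{n+1}\subset A_n$: since the $D_{n,i}$ are pairwise disjoint, each $D_{n+1,j}$ is contained in a unique $D_{n,\tau(j)}$, and the cyclic equivariances in~(\ref{i.t2}) force $\tau(j+1)\equiv\tau(j)+1\pmod{m_n}$. Hence $m_n\mid m_{n+1}$ and, by~(\ref{i.t5}), each $D_{n,i}$ contains $m_{n+1}/m_n\ge 2$ disks of $A_{n+1}$. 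Combined with~(\ref{i.t4}), this produces a homeomorphism $\cC\to\varprojlim\ZZ/m_n\ZZ$ conjugating $f|_\cC$ to the odometer, so $\cC$ is a Cantor set on which $f$ is minimal and uniquely ergodic; call $\mu$ the resulting ergodic invariant probability.

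For the hyperbolicity of $\mu$, the main tool is semi-continuity of Lyapunov exponents. Inequality~\eqref{e.inequalityexponent} presents $\nu\mapsto\chi^+(\nu)$ as an infimum over $n$ of the weak-$*$ continuous functionals $\nu\mapsto\tfrac 1n\int\log\|Df^n\|\,d\nu$, hence $\chi^+$ is upper semi-continuous; symmetrically (apply to $f^{-1}$) $\chi^-$ is lower semi-continuous. Write $\mu_{p_n}$ for the invariant probability equidistributed on $\cO(p_n)$; its support sits in $R_n$ (the orbit of $p_n$ is contained in $R_n$ by backward invariance and periodicity), so every weak-$*$ accumulation point of $(\mu_{p_n})$ is $f$-invariant and supported in $\bigcap_n R_n=\cC$, hence equals $\mu$ by unique ergodicity, so $\mu_{p_n}\to\mu$. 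Semi-continuity together with~(\ref{i.t7}) yields $\chi^+(\mu)\ge 3$ and $\chi^-(\mu)\le -1$. Finally, \eqref{e.jacob} with~(\ref{i.t6}) gives $\chi^+(\mu)+\chi^c(\mu)+\chi^-(\mu)=\int Jf\,d\mu<1$, while \eqref{e.inequalityexponent} applied to $f^{-1}$ with~(\ref{i.t6}) gives $-\chi^-(\mu)\le\int\log\|Df^{-1}\|\,d\mu<2$. Subtracting, $\chi^c(\mu)<1-\chi^+(\mu)-\chi^-(\mu)<1-3+2=0$, so $\mu$ is hyperbolic. This is the step I expect to be the main obstacle: the upper estimates from~(\ref{i.t6}) are immediate, but the positivity of $\chi^+$ and the \emph{strict} negativity of $\chi^c$ emerge only by using the semi-continuity of Lyapunov exponents together with the unique-ergodicity argument transferring the strict periodic-orbit bounds in~(\ref{i.t7}) to $\mu$.

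For the triviality of stable and unstable sets, the key observation is that~(\ref{i.t2})--(\ref{i.t3}) make $A_n$ a strict trapping region for $f$ and $R_n$ one for $f^{-1}$: $f(A_n)\subset\mathrm{Int}(A_n)$ and $f^{-1}(R_n)\subset\mathrm{Int}(R_n)$, so $A_n^c$ is strictly backward invariant, $R_n^c$ is strictly forward invariant, and the $f$-invariance of $\cC$ puts it inside $\mathrm{Int}(A_n)\cap\mathrm{Int}(R_n)$. In particular $d(\cC,A_n^c),\,d(\cC,R_n^c)>0$. Given $x\in\cC$ and $y\in W^s(x)$: if $y\notin\cC$, there is $n$ with $y\notin R_n$, whence $f^k(y)\notin R_n$ for all $k\ge 0$ while $f^k(x)\in\cC\subset\mathrm{Int}(R_n)$, contradicting $d(f^k(x),f^k(y))\to 0$; so $y\in\cC$. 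If moreover $y\ne x$, (\ref{i.t4}) places $x$ and $y$ in distinct disks $D_{n,i}\ne D_{n,j}$ for some $n$, and the cyclic permutation keeps $f^k(x),f^k(y)$ in distinct disks of $A_n$ for every $k\ge 0$, yielding a uniform positive lower bound on $d(f^k(x),f^k(y))$, again a contradiction. The argument for $W^u(x)=\{x\}$ is symmetric, using the backward invariance of $A_n^c$ in place of the forward invariance of $R_n^c$.
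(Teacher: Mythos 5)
Your proposal is correct and follows essentially the same route as the paper's proof: identify $\cC$ as a generalized adding machine (hence uniquely ergodic), use the nested small attracting/repelling disks to kill the unstable/stable sets, push $\delta_{\cO(p_n)}\to\mu$ through unique ergodicity, invoke upper semi-continuity of $\chi^+$ (and its mirror for $\chi^-$) to transfer the extremal-exponent bounds of~(\ref{t.technical}-\ref{i.t7}), and pin down $\chi^c<0$ via the Jacobian bound and the bound on $\|Df^{-1}\|$ from~(\ref{t.technical}-\ref{i.t6}). The only difference is one of exposition: where you derive the divisibility $m_n\mid m_{n+1}$ and the odometer structure by hand and re-prove the semi-continuity from the $\inf$-of-continuous-functionals characterization, the paper cites~\cite{universal} for the former and its Lemma~\ref{l.limit} for the latter.
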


We will use the following property which is a consequence of~\eqref{e.limitexponent}
and~\eqref{e.inequalityexponent}.

\begin{lemm}\label{l.limit}
Let $(\mu_n)$ be a sequence of ergodic measures 
which converges to an ergodic measure $\tilde{\mu}$
in the weak topology. Then we have 
$\limsup_{n}\chi^{+}(\mu_n) \leq \chi^{+}(\tilde{\mu})$.
\end{lemm}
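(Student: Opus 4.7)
The plan is to deduce the lemma from the two Lyapunov-exponent identities~\eqref{e.limitexponent} and~\eqref{e.inequalityexponent} combined with weak-$\ast$ convergence of the measures, using the (crucial) fact that for each fixed integer $N\geq 1$ the function $x\mapsto \log\|Df^N(x)\|$ is continuous and bounded on the compact manifold $M$ (since $f$ is a $C^1$-diffeomorphism, $Df^N$ is continuous and invertible everywhere, hence its operator norm is a positive continuous function).

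First I would fix $\varepsilon>0$. Applying~\eqref{e.limitexponent} to the limit measure $\tilde\mu$, I choose an integer $N=N(\varepsilon)$ large enough so that
\[
\frac{1}{N}\int \log\|Df^N(x)\|\, d\tilde\mu(x) \;<\; \chi^+(\tilde\mu) + \varepsilon.
\]
Next, since $\mu_n\to\tilde\mu$ in the weak topology and $\log\|Df^N\|$ is a continuous (bounded) function, weak convergence gives
\[
\int \log\|Df^N(x)\|\, d\mu_n(x) \;\longrightarrow\; \int \log\|Df^N(x)\|\, d\tilde\mu(x).
\]
In particular, for every sufficiently large $n$, the same $N$ works for $\mu_n$:
\[
\frac{1}{N}\int \log\|Df^N(x)\|\, d\mu_n(x) \;<\; \chi^+(\tilde\mu) + 2\varepsilon.
\]

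Now I apply the upper bound~\eqref{e.inequalityexponent} to each measure $\mu_n$ with this same fixed $N$, which yields
\[
\chi^+(\mu_n) \;\leq\; \frac{1}{N}\int \log\|Df^N(x)\|\, d\mu_n(x) \;<\; \chi^+(\tilde\mu) + 2\varepsilon
\]
for all large enough $n$. Taking $\limsup$ in $n$ and then letting $\varepsilon\to 0$ gives the desired inequality $\limsup_n\chi^+(\mu_n)\leq \chi^+(\tilde\mu)$.

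There is no real obstacle here: the argument is essentially the standard proof that the top Lyapunov exponent is upper semi-continuous as a function of the invariant measure. The only point that one must not forget is that while $\chi^+(\mu)$ is defined as a \emph{limit} $\frac{1}{n}\int\log\|Df^n\|\,d\mu$, it is the \emph{finite-time} quantities that are continuous in $\mu$; the passage from finite $N$ to the limit is done on the side of $\tilde\mu$ using~\eqref{e.limitexponent}, and on the side of $\mu_n$ using the one-sided bound~\eqref{e.inequalityexponent}. Ergodicity of $\mu_n$ and $\tilde\mu$ enters only to guarantee that the quantities $\chi^+(\mu_n)$ and $\chi^+(\tilde\mu)$ are intrinsically defined.
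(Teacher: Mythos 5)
Your proof is correct and is exactly the argument the paper has in mind: the paper states that Lemma~\ref{l.limit} is a consequence of~\eqref{e.limitexponent} and~\eqref{e.inequalityexponent} without spelling out the details, and your write-up (pick $N$ via~\eqref{e.limitexponent} for $\tilde\mu$, pass to the limit in $n$ using weak convergence of the finite-time average, then bound $\chi^+(\mu_n)$ from above by the finite-time average via~\eqref{e.inequalityexponent}) is precisely the standard semicontinuity argument being referenced.
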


\begin{proof}[Proof of Proposition~\ref{p.intersection}]
As in~\cite{universal}, the properties (2-\ref{i.t1}) to
(2-\ref{i.t5}) imply that
the invariant set $\cC=\bigcap A_n$ is a Cantor set and
the dynamics restricted to $\cC$ is conjugated to a (generalized) adding machine. 
In particular, there exists a unique invariant (ergodic) probability measure $\mu$
supported on $\cC$.

Since $\mathcal{C}$ is contained in the 
nested sequence of attracting regions $A_n$,
together with the fact that the diameter of each 
connected component of $A_n$ converges to $0$,
we deduce that each point 
of $\mathcal{C}$ has trivial unstable set.
By the same reasoning applied to $f^{-1}$ and $(R_n)$, 
we obtain the triviality of stable set of every point in  $\mathcal{C}$.

Let us consider the sequence of 
measures $(\delta_{\mathcal{O}(p_n)})_{n \in \mathbb{N}}$
(where $\delta_{\mathcal{O}(p_n)}$ denotes 
the ergodic invariant probability measure supported evenly along the 
orbit of $p_n$). 
We show that this sequence converges to $\mu$ in the weak topology.
Indeed, for every convergent subsequence, the support of the limit must be contained 
in $\mathcal{C}$. Since $\mathcal{C}$ is uniquely ergodic 
and the limit is an invariant measure, 
it must coincide with $\mu$. 
As a result, we see that the sequence  
$(\delta_{\mathcal{O}(p_n)})$ itself converges to $\mu$.

Then by Lemma~\ref{l.limit} and property (2-\ref{i.t7}), the extremal Lyapunov exponents are non-zero:
$$\chi^+(\mu)\geq \limsup_n\chi^+(p_n)\geq 3 \quad \text{and} \quad
\chi^-(\mu)\leq \liminf_n\chi^-(p_n)\leq -1.$$
By property~(2-\ref{i.t6}), we have $-2<\chi^-(\mu)$. By~\eqref{e.inequalityexponent}
and property~(2-\ref{i.t6}), we obtain $Jf(\mu)=\chi^-(\mu)+\chi^c(\mu)+\chi^+(\mu)<1$.
This implies $\chi^c(\mu)<0$, 
in particular $\chi^c(\mu)$ is 
non-zero, too.
Thus the measure $\mu$ is hyperbolic, which completes the proof.
\end{proof}

\section{A property $\cP$ on periodic points}
Let us recall some definitions (see also \cite{BCDG}). We fix a $C^1$-diffeomorphism $f$ of $M$ 
and two hyperbolic periodic points $p$ and $q$.
\smallskip

We say that $p$ is \emph{homoclinically related} to the orbit $\cO(q)$ of $q$
if the stable manifold $W^s(p)$ of $p$ has a transverse intersection point with the
unstable manifold $\cW^u(\cO(q))$ of $\cO(q)$ and also the unstable manifold $W^u(p)$ 
has a transverse intersection point with the stable manifold $\cW^s(\cO(q))$. 
If $p$ is homoclinically related to the orbit of $q$, then
the stable dimensions of $p$ and $q$ are equal (hence the unstable dimensions are also equal).

Suppose that the dimensions of the stable manifolds of $p$ and $q$ are different. 
We say that $p$ and $q$ \emph{belong to a
robust heterodimensional cycle} if there exists two transitive hyperbolic sets $K$ and $L$
containing $p$ and $q$ respectively, such that for any diffeomorphism $g$ that is $C^1$-close to $f$,
the intersections $W^s(L_g)\cap W^u(K_g)$ and $W^u(L_g)\cap W^s(K_g)$
between the stable and unstable sets of the continuations of $L$ and $K$ for $g$ are non-empty.
Having a robust heterodimensional cycle is a $C^1$-robust property (for the continuations of $p$ and $q$).
\smallskip

We say that $U\subset M$ is a \emph{filtrating set} if it is the intersection $U=A\cap B$ between two compact
sets $A, B \subset M$ such that $f(A)\subset \mathrm{Int}(A)$ and $f^{-1}(B)\subset \mathrm{Int}(B)$.
Note that if a hyperbolic periodic point $p$ belongs to a filtrating set $U$, then the whole orbit of $p$,
the periodic points homoclinically related to $p$
and the periodic points which belong to a robust heterodimensional cycle
associated with $p$, are also contained in $U$.
\smallskip

Consider a compact $f$-invariant set $\La$ and $k\geq 1$.
A $Df$-invariant splitting $T_{\La}M=E\oplus F$ into two non-trivial vector bundles $E,F$
over $\La$ is said to be {$k$-\emph{dominated}} if for
every $x\in \La$ and every pair of unit vectors $u\in E_x$ and
$v\in F_x$, the following inequality holds:
$$\|Df^k(x)(u)\| < \frac{1}{2}{\|Df^{k}(x) (v) \|}.$$
We say that $\La$ has no $k$-domination if there is no (non-trivial) $k$-dominated splitting $E\oplus F$ on $\La$.
\medskip

We introduce a property on hyperbolic periodic points.
\def\labelenumi{($\mathcal{P}$-\theenumi)}
\begin{defi}\label{d.p}
Let $f \in \mathrm{Diff}^1(M)$. A hyperbolic periodic point $p$ \emph{satisfies the property $\cP$} if it satisfies 
all the conditions below:
\begin{enumerate}
\item\label{i.p1} There is a hyperbolic periodic point $q_{1}$ whose stable eigenvalues are (not real and)
complex and which is  homoclinically related to the orbit of $p$.
\item\label{i.p2} There is a hyperbolic periodic point $q_{2}$ whose unstable eigenvalues are (not real and)
complex and which belongs to a robust heterodimensional cycle with the orbit of $p$.
\item \label{i.p3} There exist two hyperbolic periodic points $p^-,p^+$ homoclinically related to the orbit of $p$, such that
\begin{enumerate}
\item\label{i.p3a} $Jf(\cO(p^-))<0$ and $Jf(\cO(p^+))>0$,
\item\label{i.p3b} both two periodic points $p^{\pm}$ satisfy $\chi^+(p^\pm)>3$ and $\chi^-(p^\pm)<-1$.
\end{enumerate}
\end{enumerate}
\end{defi}

This property is clearly \emph{robust}: it is still satisfied for the hyperbolic continuation $p_g$ of
the diffeomorphisms $g$ that are $C^1$-close to $f$. The following states that it is non-empty.

\begin{prop}\label{p.cycle}
Any compact $3$-manifold $M$ admits a diffeomorphism $f$ having a filtrating set $V$ for $f$,
and a hyperbolic periodic point $p\in V$ which satisfies $\cP$.
Furthermore, such $f$ can be taken so that $Jf(x) < 1$ and $\log\|Df^{-1}(x)\|<2$ holds for every $x\in V$.
\end{prop}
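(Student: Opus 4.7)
The plan is to construct the required dynamics inside a small open ball $B \subset M$, extending by an arbitrary Morse-Smale dynamics outside. It is therefore enough to work with a local model in $\RR^3$ and arrange that the filtrating set $V$ sits strictly inside $B$; the rest of the construction is then independent of the topology of $M$.

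To produce the hyperbolic periodic points $p$ and $q_2$ in a robust heterodimensional cycle, I would insert a Bonatti--D\'iaz $cu$-blender-horseshoe near a saddle $p$ of stable index $2$, together with a classical affine horseshoe containing a saddle $q_2$ of stable index $1$, arranged so that $W^u(q_2)$ meets $W^s(p)$ transversely and $W^s(q_2)$ cuts the unstable strip of the blender. This is the standard mechanism (\emph{cf.}~\cite{BCDG}) producing a $C^1$-robust heterodimensional cycle between $p$ and $q_2$ in dimension three. The parameters of the blender and of the horseshoe---the size of the fundamental box, the contracting/expanding eigenvalues, and the number of branches---are chosen close enough to the identity so that the required bounds $Jf(x)<1$ and $\log\|Df^{-1}(x)\|<2$ hold throughout a compact filtrating neighborhood $V$ of the resulting maximal invariant set.

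The remaining periodic orbits demanded by $\cP$ are then obtained by small $C^1$-perturbations supported inside $V$. A Franks-type perturbation at $p$ (or at a convenient iterate in its homoclinic class) rotates the two stable eigenvalues into a complex conjugate pair, producing $q_1$; the symmetric perturbation at $q_2$ yields an index-$1$ saddle with complex unstable eigenvalues still participating in a robust heterodimensional cycle with $p$. For $p^{\pm}$, I would use a shadowing argument: concatenating long orbit pieces that alternate between a strongly contracting region of the blender and an auxiliary strongly expanding region (both contained in $V$) produces periodic orbits homoclinically related to $p$ whose orbit-averaged $Jf$ takes any prescribed value in an open interval containing $0$. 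Tuning the time proportions spent in each region gives simultaneously $\chi^+>3$, $\chi^-<-1$ together with the two prescribed signs of $Jf(\cO(p^{\pm}))$.

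The main obstacle I expect is the simultaneous enforcement of all these quantitative constraints on $V$: the global bound $Jf(x)<1$ must coexist with the existence of a periodic orbit $\cO(p^+)$ of large exponent $\chi^+>3$ and positive $Jf$-average, and symmetrically on the contracting side. This forces the local model to interlock, through the robust intersections provided by the blender, a volume-contracting channel and a small region of strong expansion; verifying that the shadowing orbits built in the previous step actually remain in the homoclinic class of $p$ while satisfying all the quantitative bounds is where the bulk of the technical work should lie.
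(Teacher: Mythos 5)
Your proposal takes a genuinely different route from the paper, and one that carries a substantial amount of avoidable work. The paper does \emph{not} build property $\cP$ directly: it builds the much weaker, non-robust property $\cP'$, and then invokes Theorem~\ref{t.viral2} to upgrade it. Concretely, the local model is just one linear saddle at the origin of $\RR^3$ whose derivative is $\mathrm{diag}(e^{-8/5},e^{-8/5},e^{16/5})$ (so $\chi^-=\chi^c=-8/5<-1$, $\chi^+=16/5>3$, $Jf(\cO(p))=0$), with a single homoclinic tangency and the bounds $Jf<1$, $\log\|Df^{-1}\|<2$ checked along that one orbit; the equality of the two stable eigenvalues rules out a $1$-dimensional dominated $E$, and the tangency rules out a $2$-dimensional one, so $\cP'$ holds. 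Theorem~\ref{t.viral2} then produces the filtrating set $V$ and a point $p'\in V$ satisfying $\cP$ by a small perturbation supported in a neighborhood where the quantitative bounds already hold. All the structures you propose to build explicitly --- the robust heterodimensional cycle, the complex eigenvalues, the periodic orbits $p^\pm$ with prescribed Jacobian signs --- are created by that theorem's internal machinery (Gourmelon's tangency/cycle generation and the \cite{ABCDW} barycenter result), which has already been proved.

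The spot you correctly identify as ``the bulk of the technical work'' is the genuine gap in your approach. Producing $p^\pm$ homoclinically related to $p$ with $Jf(\cO(p^+))>0$, $Jf(\cO(p^-))<0$, and \emph{simultaneously} $\chi^+>3$, $\chi^->-1$ while keeping $Jf<1$ and $\log\|Df^{-1}\|<2$ on a fixed filtrating neighborhood of a blender-horseshoe is not something a shadowing sketch settles: you need the analogue of \cite[Corollary~2]{ABCDW} (convex combinations of Lyapunov vectors in a transitive hyperbolic set are realized by periodic orbits) plus a Franks-type correction, and you would also have to check that the resulting orbits stay inside the filtrating region, that the Franks perturbations rotating eigenvalues at $q_1,q_2$ do not break the robust cycle, and that none of these perturbations violates the two uniform bounds. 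None of this is impossible, but you would essentially be re-proving the content of Steps~I--III of the proof of Theorems~\ref{t.viral} and~\ref{t.viral2} inside the proof of Proposition~\ref{p.cycle}. The cleaner plan is to construct the elementary $\cP'$ model and hand everything else to Theorem~\ref{t.viral2}; that is why the paper introduced $\cP'$ in the first place.
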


The proof of Proposition~\ref{p.cycle} will be discussed later (see Section~\ref{p.conex}).
\medskip

We will prove that the property $\cP$ can be reproduced by perturbation 
at new periodic points with higher periods and separated from the initial periodic point $p$ by a filtrating set.
This idea was firstly used in \cite{universal} to build \emph{aperiodic classes}, 
using the lack of domination (properties ($\mathcal{P}$-\ref{i.p1}) and ($\mathcal{P}$-\ref{i.p2}))
and the existence of points homoclinically related to $p$ with Jacobian greater and less than $1$
(property ($\mathcal{P}$-\ref{i.p3a})), in order to prove the generic existence of \emph{universal dynamics}
(see \cite{universal} for detail).
Then \cite{survey} defined the notion of \emph{viral property}, 
which is an abstract formalization of the ``reproduction property" used in the proof of \cite{universal}:
viral properties always lead to the $C^1$-generic coexistence of uncountably many chain recurrent classes 
and conjectures in \cite{survey} propose that, conversely,  the $C^1$-locally generic 
coexistence of uncountably many  chain recurrent classes 
implies the existence of some viral property.
\begin{defi}
A property for hyperbolic periodic points is \emph{viral}
if it is $C^1$-robust and if, for any filtrating set $U$ containing $p$,
there is an arbitrarily $C^1$-small perturbation $g$ of $f$ which produces 
a periodic point $p'$ satisfying the property
and contained in a filtrating set $U'\subset U$ disjoint from $p_g$.
\end{defi}

In the next section we will prove the following theorem,
which essentially states that $\cP$ is viral.

\begin{theo}\label{t.viral}
Let $f \in \mathrm{Diff}^1(M)$ and $p$ be a hyperbolic periodic point satisfying the property $\cP$.
Then for every $C^1$-neighborhood $\cU$ of $f$, for every filtrating set $U$ containing $p$,
for every $\delta > 0$ and $m_0\geq 1$,
there exist $g\in \cU$, a hyperbolic periodic point $p'$ for $g$
and two compact regions $R\subset A\subset M$ satisfying the following:
\def\labelenumi  {(\ref{t.viral}-\theenumi)}
\begin{enumerate}
\item\label{i.v1} The periodic point $p'$ satisfies $\cP$ with period $m$ greater than $m_0$
and the whole orbit is contained in $R$.
\item\label{i.v2} $A$ is a disjoint union of $m$ disks $A = \coprod_{i=0}^{m-1} D_{i}$ such that 
$f(D_{i}) \subset \mathrm{Int}(D_{i+1})$ for each $i \in \ZZ/(m\ZZ)$
and $\max_{i \in \mathbb{Z}/(m\mathbb{Z})} \mathrm{diam}(D_{i})<\delta$.
\item\label{i.v3} $R$ is a disjoint union of $m$ disks 
$R = \coprod_{i=0}^{m-1} E_{i}$ such that $f^{-1}(E_{i}) \subset \mathrm{Int}(E_{i+1})$ for each $i \in \ZZ/(m\ZZ)$
and $\max_{i \in \mathbb{Z}/(m\mathbb{Z})} \mathrm{diam}(E_{i})<\delta$.
\end{enumerate}
\end{theo}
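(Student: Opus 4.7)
\emph{Overall strategy.} The idea is to reproduce the classical ``viral'' construction of aperiodic classes of~\cite{universal}: concatenate orbit segments shadowing the four periodic orbits $\cO(p^-), \cO(q_1), \cO(q_2), \cO(p^+)$ provided by property $\cP$ to obtain a long closed pseudo-orbit, close it by Hayashi's connecting lemma to an honest periodic orbit $\cO(p')$ of a $C^1$-small perturbation $g \in \cU$, and finally use Franks' lemma to prescribe $Dg$ along $\cO(p')$ so that $\cP$ is reproduced at $p'$ while nested filtrating tube neighborhoods $R\subset A$ come for free.

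\emph{The closed pseudo-orbit.} The homoclinic relations between $p$ and $p^\pm$ (resp.\ $q_1$) produce orbit segments that leave a small neighborhood of $p$, shadow $\cO(p^\pm)$ (resp.\ $\cO(q_1)$) for $N^\pm$ (resp.\ $N_1$) iterates, and return near $p$; the robust heterodimensional cycle of $p$ with $q_2$ similarly provides segments shadowing $\cO(q_2)$ for $N_2$ iterates (the transit through $q_2$ going via blender intersections, see~\cite{BCDG}). Concatenating one of each yields a closed pseudo-orbit in $U$ of period $m\approx N^-+N_1+N_2+N^+$, which is made $>m_0$ by taking the $N_\ast$ large enough. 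Hayashi's connecting lemma then produces $g$ arbitrarily $C^1$-close to $f$ realizing this pseudo-orbit as an actual periodic orbit $\cO(p')$, entirely inside $U$ and disjoint from $\cO(p_g)$ for the perturbation small enough.

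\emph{Reproducing property $\cP$.} A further application of Franks' lemma lets us assume that $Dg$ along $\cO(p')$ coincides with $Df$ along the shadowed segments up to arbitrarily small error. Consequently, short homoclinic or heteroclinic loops at $p'$ obtained by one return through $\cO(q_1), \cO(q_2)$, or $\cO(p^\pm)$ produce periodic points $q'_1, q'_2, p'^{\pm}$ homoclinically related to (resp.\ in a heterodimensional cycle with) $p'$, whose eigenvalues essentially inherit those of $q_1, q_2, p^\pm$: complex stable eigenvalues at $q'_1$, complex unstable at $q'_2$ (the heterodimensional cycle becoming robust again through the blender structure), $Jg(\cO(p'^{-}))<0<Jg(\cO(p'^{+}))$, and $\chi^{\pm}(p'^{\pm})$ in the ranges required by $(\cP\text{-}3b)$. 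Hence $p'$ satisfies $\cP$, and this is robust.

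\emph{Attracting/repelling tubes and main difficulty.} The delicate step is the construction of the filtrating disks. Around each $g^i(p')$ we take a small topological $3$-ball $D_i$, the tube $A=\coprod_{i=0}^{m-1} D_i$ being the candidate attracting region. For $A$ to be forward invariant we must have $g(D_i)\subset \mathrm{Int}(D_{i+1})$, even though $p'$ is a hyperbolic saddle of $g^m$ with a genuine unstable direction; the local unstable manifold at $g^i(p')$ must therefore be \emph{trapped} inside $D_i$. This is possible precisely because the absence of dominated splitting from $(\cP\text{-}1)$ (complex stable eigenvalues of $q_1$) permits, through an additional Franks perturbation along the $q_1$-portion, a rotation of the unstable direction along $\cO(p')$ that makes the local unstable manifold curl back before exiting; meanwhile the volume contraction of the $p^-$-portion (from $Jf(\cO(p^-))<0$ with $N^-$ large) supplies enough dissipation for the tube to satisfy $\mathrm{diam}(D_i)<\delta$. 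Reversing time and using symmetrically the $p^+$-portion together with $q_2$ (whose complex unstable eigenvalues play the role of $q_1$) produces the repelling disks $E_i\subset D_i$. The hardest point is the simultaneous bookkeeping: $p'$ must remain a hyperbolic saddle, carry $\cP$, and yet admit a $3$-ball filtrating neighborhood; this delicate balance is exactly what the particular combination of conditions in $\cP$ is engineered to make achievable.
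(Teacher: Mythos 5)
Your overall route is plausible in outline, but it misses the single most important mechanism in the paper's proof, and the substitute you offer for it does not work.

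The paper's construction of the nested attracting region $A$ and repelling region $R$ hinges on arranging $Jf_1(\cO(p'))=0$ \emph{exactly}. This is achieved as follows: by (P-3a) there are $p^-$ and $p^+$ with $Jf(\cO(p^-))<0<Jf(\cO(p^+))$, both inside the transitive hyperbolic set $K$ obtained from the robust cycle, and by the barycenter theorem of \cite{ABCDW} one finds a periodic orbit $\cO(p')\subset K$, close to $K$ in Hausdorff distance (hence with no $k$-dominated splitting), whose Lyapunov exponent vector is arbitrarily close to the convex combination $\tfrac{J^+}{J^-+J^+}\chi(p^-)+\tfrac{J^-}{J^-+J^+}\chi(p^+)$, whose sum is $0$; a Franks perturbation then makes the Jacobian vanish exactly while keeping hyperbolicity and (P'-1). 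Once $Jf_1(\cO(p'))=0$, the lack of $k$-domination permits two small Franks-type perturbations turning $\cO(p')$ into a sink (producing the trapping disks $D_i$) and, after locally recovering the original differential, into a source inside (producing the repelling disks $E_i\subset D_i$); a final recovery near $\cO(p')$ restores the saddle. Your proposal never explains how the Jacobian is controlled: in fact, your plan to weight the pseudo-orbit towards the $p^-$-portion ``for dissipation'' makes $Jg(\cO(p'))<0$, which lets you build $A$ but then makes it impossible to build the repelling region $R$ by any Franks perturbation along the same orbit, and passing to time-reversal does not help since it flips the sign of the Jacobian too.

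Second, your proposed mechanism for the attracting tube --- rotating the unstable direction along $\cO(p')$ so that the local unstable manifold ``curls back before exiting'' --- is not a valid argument. A hyperbolic saddle always has a local unstable manifold whose forward orbit exits any sufficiently small neighborhood; no $C^1$-small rotation of the splitting changes this. The paper sidesteps this obstruction completely differently: the disks $D_i$ are not neighborhoods adapted to the saddle $p'$, but are inherited from an \emph{intermediate} perturbation where $p'$ is a sink, and they remain attracting after the saddle is recovered on a much smaller scale because the perturbations used to recover are supported strictly inside them. So the trapping is a consequence of the layered perturbation scheme made possible by $Jf_1(\cO(p'))=0$, not of any geometry of the splitting. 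Finally, while closing a pseudo-orbit via Hayashi's connecting lemma is a legitimate tool, it would not by itself give the quantitative control over Lyapunov exponents and the Jacobian that the construction requires; the paper instead works inside a locally maximal hyperbolic set and invokes \cite[Lemma 4.1]{BCDG} and \cite{ABCDW} for precisely this control.
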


We now give the proof of Theorem~\ref{t.technical} by a genericity argument.

\begin{proof}[Proof of Theorem~\ref{t.technical} from Proposition~\ref{p.cycle} and Theorem~\ref{t.viral}]
By Proposition~\ref{p.cycle}, there exist a diffeomorphism $f_0$, a filtrating set $U$,
a $C^1$-neighborhood $\cU$ of $f_0$ such that any diffeomorphism $f\in \cU$ satisfies the property~(\ref{t.technical}-\ref{i.t6})
of Theorem~\ref{t.technical} for every point $x\in U$ and 
admits a hyperbolic periodic point $p\in U$ with the property $\cP$.

Let $\delta_n>0$ be a sequence tending to $0$ as $n\to +\infty$. By Theorem~\ref{t.viral}, we inductively build a sequence of $C^1$-open sets
$\cU\supset \cG_1\supset \cG_2\supset\cdots$ such that every $\cG_n$ is dense in $\cU$ 
and for every $h\in \cG_n$, we can find some 4-ple $(A_n,R_n, m_n, p_n)$ associated to $\delta_n$, 
where $R_n \subset A_n\subset U$ are two compact subsets, $m_n$ is an integer,
and $p_n$ is a hyperbolic periodic point satisfying the properties of the Theorem~\ref{t.technical}.
We can assume that $A_n,R_n,m_n$ are locally constant on $\cG_n$
and that $p_n$ depends continuously on $f$.  
Then the $\mathrm{G}_{\delta}$ subset $\cR:=\cap \cG_i$ of $\cU$ is dense by Baire's category theorem and 
satisfies the conclusion of Theorem~\ref{t.technical}.
\end{proof}

In order to prove Proposition~\ref{p.cycle},
we will use a different version of Theorem~\ref{t.viral},
where property $\cP$ in the assumption is replaced 
by a slightly different property $\cP'$ (which is not robust).

\begin{defi}\label{d.pp}
Let $f \in \mathrm{Diff}^1(M)$. A hyperbolic periodic point $p$ and a homoclinic point
$z\in W^s(p)\cap W^u(p)$ \emph{satisfy the property $\cP'$} if:
\def\labelenumi{($\mathcal{P}'$-\theenumi)}
\begin{enumerate}
\item\label{i.pp1} $\chi^-(p)< -1$, $\chi^+(p)>3$ and $Jf(\cO(p))=0$.
\item\label{i.pp2} The union $\cO(p)\cup\cO(z)$ of the orbits of $p$ and $z$
does not admit any dominated splitting.
\end{enumerate}
\end{defi}
Note that the point $z$ in the previous definition must be a homoclinic tangency of the orbit of $p$:
indeed, if the intersection $W^s(p)\cap W^u(p)$ is transverse at $z$, then Smale's intersection theorem
implies that $\cO(p)\cup\cO(z)$ is a hyperbolic set, which contradicts 
the lack of domination~($\mathcal{P}'$-\ref{i.pp2}).

\begin{theo}\label{t.viral2}
Let $f \in \mathrm{Diff}^1(M)$, $p$ a hyperbolic periodic point
and $z$ a homoclinic point of $p$ satisfying the property $\cP'$.
Then for every $C^1$-neighborhood $\cU$ of $f$ and for every
neighborhood $U$ of $\cO(p)\cup\cO(z)$,
there exist $g\in \cU$, a filtrating set $V\subset U$ and
a hyperbolic periodic point $p'\in V$ which satisfies the property $\cP$.
\end{theo}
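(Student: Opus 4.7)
The plan is to exploit the three ingredients of $\cP'$ --- the homoclinic tangency at $z$, the absence of any dominated splitting on $\cO(p)\cup\cO(z)$, and the vanishing Jacobian $Jf(\cO(p))=0$ --- in order to produce, via a finite chain of $C^1$-small perturbations, a new hyperbolic periodic point $p'\in U$ whose orbit carries the full robust configuration $\cP$. The governing mechanism is that the lack of domination permits large relative perturbations of the linear cocycle along orbits shadowing $\cO(p)\cup\cO(z)$, in the spirit of Bonatti--D\'\i az--Pujals and Gourmelon's enhanced version of Franks' lemma, while the tangency supplies the shadowing and the unfolding needed to create nearby periodic points of different indices.

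My first step would be to perturb $f$ to some $g$, by a $C^1$-small perturbation supported near $z$, so as to produce a hyperbolic periodic point $p'$ of arbitrarily large period $m$ whose orbit stays in $U$ and shadows $\cO(p)\cup\cO(z)$, spending a controlled fraction of its time near $\cO(p)$. Standard shadowing and ergodic estimates then give that the Lyapunov exponents of $p'$ lie arbitrarily close to those of $p$; in particular $\chi^+(p')>3$, $\chi^-(p')<-1$, and $|Jg(\cO(p'))|$ is arbitrarily small. Since $p'$ is hyperbolic and its orbit is contained in $U$, a small tubular neighborhood of $\cO(p')$ intersected with suitable sub- and super-level sets of a local Lyapunov function yields a filtrating set $V\subset U$ enclosing $\cO(p')$.

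My second step would be to install on $p'$ the three features required by $\cP$, by further $C^1$-small perturbations localized along $\cO(p')$. The non-domination present on $\cO(p)\cup\cO(z)$ transfers, by shadowing, to long orbit segments of $p'$, and so Gourmelon's refinement of Franks' lemma --- which allows the derivative cocycle to be perturbed along a finite orbit while preserving a prescribed transverse homoclinic intersection --- supplies the needed flexibility. Using it, I would (i) rotate the two stable (respectively unstable) eigenvalues into a complex conjugate pair to produce periodic points $q_1$ (respectively $q_2$) homoclinically related to $p'$ with complex stable (respectively unstable) eigenvalues; (ii) unfold the tangency at $z$, combined with the ability to slide the central exponent across zero supplied by non-domination, into a heterodimensional cycle between $p'$ and $q_2$, which is then rendered $C^1$-robust by the Bonatti--D\'\i az blender construction; and (iii) slightly adjust the central eigenvalue along two different subsegments of $\cO(p')$ to obtain periodic points $p^-$ and $p^+$ homoclinically related to $p'$ with Jacobians of opposite signs, while keeping the extremal Lyapunov exponents outside the interval $[-1,3]$.

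The main obstacle is the mutual compatibility of these perturbations: each must preserve the homoclinic relations and the spectral inequalities obtained at earlier stages. The crucial device for handling this is precisely Gourmelon's version of Franks' lemma, which permits modification of the cocycle along a finite orbit segment without destroying a chosen transverse homoclinic point, so that the homoclinic relations to $p'$ survive every perturbation. Taking the period $m$ sufficiently large, the three perturbations above can be localized on pairwise disjoint subsegments of $\cO(p')$, so they can be carried out sequentially inside the prescribed neighborhood $\cU$ of $f$ without interfering with each other and without letting the invariant sets leave the filtrating region $V$.
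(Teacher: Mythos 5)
There is a genuine gap in your construction of the filtrating set $V$. You propose to extract $V$ directly from ``a small tubular neighborhood of $\cO(p')$ intersected with suitable sub- and super-level sets of a local Lyapunov function.'' This cannot work for a hyperbolic periodic \emph{saddle}: if $A$ is compact, satisfies $g(A)\subset\mathrm{Int}(A)$, contains $\cO(p')$ and lies in a thin tubular neighborhood of the orbit, then forward invariance of $A$ forces points of $W^u_{\mathrm{loc}}(p')\cap A$ to remain in $A$ under iteration, while the unstable manifold escapes every small tubular neighborhood of the orbit---a contradiction. Small attracting/repelling regions around an orbit exist only near sinks and sources, not saddles. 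Worse, right after you create $p'$ by unfolding the tangency at $z$, the point $p'$ is homoclinically related to $p$ inside a horseshoe, so its chain-recurrence class is large and cannot be enclosed in a small filtrating region without a further perturbation that disconnects it.

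The step you are missing is precisely the ``separation of the saddle'': the paper first forces $Jf_1(\cO(p'))$ to be \emph{exactly} zero via Franks' lemma (this is why ($\mathcal{P}'$-1) stipulates $Jf(\cO(p))=0$ and not merely a small Jacobian), and then, using the absence of $k$-domination along $\cO(p')$, temporarily turns $\cO(p')$ into a sink to produce a small attracting region $A$, recovers the original saddle dynamics in a smaller neighborhood while preserving the attracting property of $A$, and repeats the procedure with a source to produce the repelling region $R$; then $V=A\cap R$. Having only ``$|Jg(\cO(p'))|$ arbitrarily small,'' as you state, does not allow you to turn $\cO(p')$ both into a sink and a source, and without the sink/source-and-recovery step you cannot enclose a saddle in a small filtrating set, so the conclusion of the theorem is not reached. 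Two secondary points: by the time you reach step (ii) the tangency at $z$ has already been consumed to create the horseshoe, so the robust heterodimensional cycle must instead be built from a \emph{new} tangency created along $\cO(p')$ from the lack of domination (via \cite{Go,Go2}); and $q_2$, having a different index, cannot be homoclinically related to $p'$ as written---it enters only through the robust cycle.
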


\section{Virality of the property $\cP$ (Theorems~\ref{t.viral} and~\ref{t.viral2})}
\label{ss.viral}

The proof of Theorems~\ref{t.viral} and~\ref{t.viral2}
is a modification of the argument of \cite[Proposition 9.4]{BCDG},
where the virality of a property $\mathfrak{V}''$ was proved. 
For $3$-dimensional manifolds, the definition of $\mathfrak{V}''$ can be stated as follows.

\begin{defi}
A hyperbolic periodic point $p$ of $f$ \emph{satisfies the
property $\mathfrak{V}''$} if conditions~($\mathcal{P}$-\ref{i.p1}) and ($\mathcal{P}$-\ref{i.p2}) of 
Definition~\ref{d.p} hold.
\end{defi}

The proof of the virality of the property $\mathfrak{V}''$ in~\cite{BCDG} consists of six steps.
Let us fix a diffeomorphism $f$, a $C^1$-neighborhood $\cU$ of $f$,
a filtrating set $U$, a hyperbolic periodic point $p\in U$ satisfying $\mathfrak{V}''$,
an integer $m_0$ and $\delta>0$.
Then there are positive integers $k$ and $\ell$ 
such that any periodic saddle of $f$ (or of a $C^1$-perturbation of $f$)
which has no $k$-domination and period larger than $\ell$ may be turned into a sink or a source
(see~\cite[Lemma 4.3]{BCDG}) and may give birth to a homoclinic tangency (see~\cite[Lemma 2.1]{BCDG})
by a $C^1$-perturbation in $\cU$.

\begin{enumerate}
\item[]  \emph{Step I. Selection of the saddle $p'$.}
This step consists of selecting (after an arbitrarily $C^1$-small perturbation $f_1$ of $f$)
a periodic point $p'$ of the diffeomorphism $f_1$ homoclinically related to the orbit of
$p$ which has no $k$-dominated splitting and whose period $m$ is larger than $\ell$ and $m_0$.
Indeed since  the orbits of $p$ and $q_2$ belong to a robust heterodimensional cycle,
for any $C^1$-generic diffeomorphism $f_1$ which is $C^1$-close to $f$,
there exists a transitive locally maximal hyperbolic set $\Lambda$
which contains $p$, $q_1$, $p^-$, $p^+$
and a point arbitrarily close to $q_2$.
From properties ($\mathcal{P}$-\ref{i.p1}) and ($\mathcal{P}$-\ref{i.p2}),
the set $\Lambda$ has no $k$-dominated splitting.
One then chooses a periodic point $p'\in \Lambda$ whose orbit is sufficiently close to
$\Lambda$ in the Hausdorff topology.

Since $U$ is filtrating, $\cO(p')$ is contained in $U$.

\item[] \emph{Step II. Separation of the saddle.}
The lack of domination along the periodic orbit of $p'$
allows us to turn it into a sink or a source, depending on the sign of $Jf_1(\cO(p'))$.
Hence there exists a perturbation of $f_1$ in $\cU$ with small support
which creates a compact set $A$ or $R$ contained in $U$, containing $p'$, disjoint from the orbit of $p$
and satisfying property (\ref{t.viral}-\ref{i.v2}) or (\ref{t.viral}-\ref{i.v3}) of Theorem~\ref{t.viral}.
By a new perturbation (we denote the resulted diffeomorphism by $f_2$), 
one can ``recover" the original diffeomorphism $f_1$ in a smaller
neighborhood of $\cO(p')$. In particular, the differential of $f_2$ is equal to
that of $f_1$ along $\cO(p')$ and $f_2$ still has no $k$-dominated splitting
along $\cO(p')$.
\item[] \emph{Step III. New periodic orbits homoclinically related to $p'$.}
The lack of domination along the periodic 
orbit $\cO(p')$ allows us to create a horseshoe by unfolding a homoclinic tangency associated to $p'$,
using the result of~\cite{Go}. Again, we require that for this new perturbation $f_3$ the tangent maps
$Df_3$, $Df_2$ (and $Df_1$) coincide along the orbit of $p'$ (see also Remark~\ref{r.gou} below).
The perturbation is supported in an arbitrarily small neighborhood of $\cO(p')$.
\item[] \emph{Steps IV, V, VI.} After a new perturbation $g\in \cU$ of $f_3$, 
we turn the periodic point $p'$
satisfying the property $\mathfrak{V}''$. The perturbation is realized in an arbitrarily
small neighborhood of any periodic orbit homoclinically related to $p'$.
\end{enumerate}
\begin{rema}\label{r.gou}
In Step III, the last requirement was not justified in~\cite{BCDG}, but it is a consequence
of the results of~\cite{Go,Go2}.
Indeed, the homoclinic bifurcation is obtained from the lack of $k$-domination along the periodic orbit
$\cO(p')$ by applying~\cite[Theorem 3.1]{Go}: in~\cite[section 6.1]{Go2}, it is
proved that this perturbation can be performed by preserving the periodic orbit $\cO(p')$ and the
derivatives along it (see~\cite[Theorem 8]{Go2}).
\end{rema}

This gives a version of Theorem~\ref{t.viral} for property $\mathfrak{V}''$,  
where only one of the conditions~(\ref{t.viral}-\ref{i.v2}) or~(\ref{t.viral}-\ref{i.v3}) can be obtained
depending on the sign of $Jf_1(\cO(p'))$. 
We now explain how to modify the previous argument.
The modification of steps I and II allows us to build both attracting and repelling regions $A$ and $R$
using  either the condition~($\mathcal{P}$-\ref{i.p3a}) of Definition~\ref{d.p} 
or the condition~($\mathcal{P}'$-\ref{i.pp2}) of Definition~\ref{d.pp}.
The modification of steps I and III allows us to obtain
the condition~($\mathcal{P}$-\ref{i.p3}) for the new point $p'$.
The proofs of Theorems~\ref{t.viral} and~\ref{t.viral2} 
are very similar and only differ in their first step.
So we present them simultaneously except the first step.

\medskip

\begin{proof}[Proof of Theorems~\ref{t.viral} and~\ref{t.viral2}]

Let $f$ be a $C^1$-diffeomorphism satisfying the assumption of Theorem~\ref{t.viral}
or Theorem~\ref{t.viral2}.
Let us fix a $C^1$-neighborhood $\cU$ of $f\in \diff^1(M)$,
a filtrating set $U$, a hyperbolic periodic orbit $\cO(p)\subset U$, an integer $m_0 >0$ and $\delta>0$.
As before, we fix positive integers 
$k$ and $\ell$ such that any periodic saddle of $f$ (or of a $C^1$-perturbation of $f$)
which has no $k$-domination and period larger than $\ell$ may be turned into a sink or a source
and may give birth to a homoclinic tangency by a $C^1$-perturbation in $\cU$.
\bigskip

\noindent\textbf{Step I.}
As is in \cite{BCDG}, after some 
preliminary perturbation we select a periodic point which 
does not admit $k$-dominated splitting along the orbit with certain conditions.

\medskip

\noindent \emph{Proof under the assumptions of Theorem~\ref{t.viral}.}
Arguing as in the proof of the step I in~\cite{BCDG},
by the conditions~($\mathcal{P}$-\ref{i.p1}) and ($\mathcal{P}$-\ref{i.p2}),
there exists a $C^1$-small perturbation $f_1$ of $f$
and a transitive hyperbolic set $K$ that is locally
maximal, contains $p,p^-,p^+$ and  
has no $k$-dominated splitting.

We then use the condition~($\mathcal{P}$-\ref{i.p3}).
A $C^1$-small perturbation ensures that $f$ belongs to a dense $\mathrm{G}_\delta$
subset of $\diff^1(M)$ such that the properties of~\cite[Lemma 4.1]{BCDG} and~\cite[Corollary 2]{ABCDW}
hold. In particular, \cite{ABCDW} implies that there exists a periodic point $p'\in K$
whose Lyapunov exponents $(\chi^-(p'),\chi^c(p'),\chi^+(p'))$ are arbitrarily close to the barycenter
$$\frac{J^+}{J^-+J^+}\cdot(\chi^-(p^-),\chi^c(p^-),\chi^+(p^-))\;+
\;\frac{J^-}{J^-+J^+}\cdot(\chi^-(p^+),\chi^c(p^+),\chi^+(p^+)),$$
where $J^+=|Jf(\cO(p^+))|$ and $J^-=|Jf(\cO(p^-))|$.
Note that since $Jf(\cO(p^+))>0$ and $Jf(\cO(p^-))<0$, this value is equal to $0$.
Thus the quantity $Jf(\cO(p'))$
can be taken arbitrarily close to $0$. 
Then by a small $C^1$-perturbation $f_1$
(given by Franks' lemma, see~\cite{Go2})
whose effect on the derivative of $f$ along the orbit of $p'$ is a multiplication
by a homothety close to identity,
one can ensure that $Jf_1(\cO(p'))=0$, keeping the Lyapunov exponents almost unchanged. Furthermore,  
since the perturbation is arbitrarily $C^1$-small, the transitive hyperbolic set $K$ 
has a hyperbolic continuation.  Therefore,
the points $p, p'$ still belong to a same transitive hyperbolic set and are homoclinically related.
By construction, $p'$ and $f_1$ satisfies ($\mathcal{P}'$-\ref{i.pp1}) in Definition~\ref{d.pp}.

We complete this step as before:
from \cite[Lemma 4.1]{BCDG}, the periodic orbit $p'$ above can be chosen 
arbitrarily close to
$K$ with respect to the Hausdorff topology,
hence there is no $k$-domination along the orbit of $p'$ for $f_1$, and the period may be chosen
larger than $\ell$ and $m_0$. Since $U$ is filtrating and $p$ belongs to $U$,
the orbit of $p'$ is contained in $U$.
\medskip

\noindent \emph{Proof under the assumptions of Theorem~\ref{t.viral2}.}
By unfolding the homoclinic tangency at $z$, 
we create a horseshoe $\Lambda\subset U$ containing $p$
and close to $C=\cO(p)\cup\cO(z)$ in the Hausdorff topology, by an arbitrarily
$C^1$-small perturbation $f_1$ of $f$. 

We can assume there is no $k$-dominated splitting
on $\Lambda$:
indeed,  as $f_1$ converges to $f$, the horseshoe 
$\Lambda = \Lambda_{f_1}$ converges to $C$.
If every $\Lambda_{f_1}$ admits $k$-dominated splitting,
then, since dominated splitting is preserved
by taking limits (see~\cite[Appendix B.1]{BDV}),
it means that $C$ admits a dominated splitting,
but it contradicts to the assumption.
We furthermore assume that $f_1$ satisfies the conclusion of~\cite[Lemma 4.1]{BCDG}.

Now we see that there exists a periodic point $p'\in \Lambda$
close to $\Lambda$ in the Hausdorff topology, whose Lyapunov exponents are close to the exponents of $p$.
Then by an arbitrarily $C^1$-small perturbation $f_1$ given by Franks' lemma,
the Lyapunov exponents of $p'$ for $f_1$ coincide with those of $p$ for $f$.
As before, since $\cO(p')$ and $\Lambda$ are close,
there is no $k$-domination along the orbit of $p'$ and the period may be chosen larger than $\ell$ and $m_0$.
\bigskip

\noindent
\textbf{Step II.} We repeat the step II of~\cite{BCDG}.
Under our current assumption $Jf_1(\cO(p'))=0$, 
we know that the orbit of $p'$ can be 
turned both to a sink and to a source, 
which gives both of the regions $A$ and $R$ containing $\cO(p')$.
This gives the filtrating region $V$ in Theorem~\ref{t.viral2}. 
For the case of Theorem~\ref{t.viral},
since the perturbation can be performed locally, 
we can assume that $A$ and $R$ are sufficiently small so that 
the conditions~(\ref{t.viral}-\ref{i.v2}) and~(\ref{t.viral}-\ref{i.v3})
holds for the constant $\delta$.
\bigskip

\noindent\textbf{Step III.}
We first repeat the step III of~\cite{BCDG}:
according to Remark~\ref{r.gou}, the lack of domination along the orbit of 
$p'$ allows us to create a homoclinic tangency. 
Then by unfolding it we create a non trivial horseshoe 
associated $p'$ without changing the derivative along the orbit of $p'$.
The perturbation is supported in a small neighborhood of $\cO(p')$,
hence we can asuume that the sets $A$, $R$ or $V$ satisfy the desired property.

Then, after an arbitrarily $C^1$-small perturbation 
we create periodic orbits with arbitrarily large period
that are homoclinically related 
to $p'$ with Lyapunov exponents being close
to those of $p'$. Indeed, one can perturb the diffeomorphism
so that it belongs to the dense $\mathrm{G}_\delta$-set of diffeomorphisms satisfying
the conclusion of~\cite[Lemma 4.1]{BCDG}.
This enables us to obtain two hyperbolic periodic points ${p'}^-,{p'}^+$
whose orbits are homoclinically related to $p'$, 
satisfying the property~($\mathcal{P}$-\ref{i.p3b}) of Definition~\ref{d.p}
and such that $Jf_3(\mathcal{O}({p'}^\pm))$ are close to $0$. As before, by perturbation of $f_3$ given by Franks' lemma,
we can ensure the property~($\mathcal{P}$-\ref{i.p3a}), that is, $Jf_3(\cO({p'}^-))<0$ and $Jf_3(\cO({p'}^+))>0$.
\bigskip

\noindent
\textbf{Final Step.} We repeat the steps IV, V, VI of~\cite{BCDG}
in order to build a last perturbation $g\in \cU$ such that
conditions~($\mathcal{P}$-\ref{i.p1}) and~($\mathcal{P}$-\ref{i.p2}) of Definition~\ref{d.p} hold.
They can be performed outside a neighborhood of
a transitive hyperbolic set which contains $p',{p'}^-$ and ${p'}^+$. In particular ${p'}^-,{p'}^+$
still satisfy the property~($\mathcal{P}$-\ref{i.p3}) of Definition~\ref{d.p} and 
thus $p'$ has the property $\cP$.
\medskip

This concludes the proof of Theorems~\ref{t.viral} and ~\ref{t.viral2}.
\end{proof}

\section{Construction of a diffeomorphism satisfying $\cP$}
\label{p.conex}

In this section we give the proof of Proposition~{\ref{p.cycle}}.

\begin{proof}[Proof of Proposition~{\ref{p.cycle}}]
By deforming a linear automorphism
one can easily build a diffeomorphism $F$ of $\RR^3$ such that:
\begin{itemize}
\item For every $x\in \RR^3\setminus B(\boldsymbol{0},1)$, one has $F(x)=x$,
where $B(x, r)$ denotes the three dimensional ball centered at $x$ with radius $r$ 
and $\boldsymbol{0}$ is the origin of $\RR^3$.
\item In a neighborhood of the origin $\boldsymbol{0}$, the diffeomorphism $F$ has the form
\[ 
(r,s,t)\mapsto (\exp(-8/5)r, \, \exp(-8/5)s, \, \exp(16/5)t).
\]
In particular, $\boldsymbol{0}$ is the hyperbolic fixed point of $F$.
\item There exists a point of homoclinic tangency $a$ between $W^s(\boldsymbol{0})$ and $W^u(\boldsymbol{0})$.
\item For every $n \in \mathbb{Z}$ one has $ JF(F^n(a))<1$ and $\log\|DF^{-1}(F^N(a))\|<2$.
In particular, there exists an open neighborhood $W$ of the union $\cO(\boldsymbol{0})\cup\cO(a)$
such that $JF<1$ and $\log\|DF^{-1}\| <2$ on $W$.
\end{itemize}

\medskip

Let us consider any closed $3$-dimensional manifold $M$.
Since $F$ coincides with the identity outside the ball $B(\boldsymbol{0},1)$,
it can be realized as the restriction of a diffeomorphism of $M$:
there exist a map $h\colon B(\boldsymbol{0},1)\to M$ which is a diffeomorphism to its image $B=h(B(\boldsymbol{0},1))$
and a diffeomorphism $f$ of $M$ so that the restriction of $f$ to $B$ coincides with $h F h^{-1}$.

Let us define $p=h(\boldsymbol{0})$, $z=h(a)$ and $U=h(W)$.
Since the two stable eigenvalues at $p$ coincide, there is no dominated splitting $E\oplus F$
above $\cO(p)\cup\cO(z)$ such that $E$ is one-dimensional.
Furthermore, since $z$ is a homoclinic tangency,
there is no dominated splitting such that $E$ is two-dimensional.
The points $p$ and $z$ thus satisfy the property $\cP'$ and
the assumptions of Theorem~\ref{t.viral2} hold for $f$. 
Hence the Proposition~\ref{p.cycle} follows.
\end{proof}

\section{Uncountability of sets supporting hyperbolic measures with trivial (un)stable sets }\label{s.uncountable}
As explained in \cite{survey}, viral properties always lead to the 
generic coexistence of uncountably many aperiodic classes.  
Since the argument is very short, we recall it here.  This concludes the 
uncountability of the ergodic measures in Remark~\ref{r.conceptual}.

By repeating the proof of Theorem~\ref{t.technical} inductively, 
for each diffeomorphism $f$ in a dense $\mathrm{G}_\delta$ subset of $\cU$, 
each $n\geq 1$ and each word $w\in \{0,1\}^n$,
we can obtain compact sets $R_{w}\subset A_{w}$, an integer $m_{w}$
and a hyperbolic periodic point $p_{w}$ such that properties~(\ref{t.technical}-\ref{i.t2}), (\ref{t.technical}-\ref{i.t3}), 
(\ref{t.technical}-\ref{i.t4}) and (\ref{t.technical}-\ref{i.t7}) in Theorem~\ref{t.technical} are satisfied.
Furthermore, we can construct them so that $A_{w'}\subset R_{w}$ and $m_{w'}>m_{w}$ holds
when the first $n$ symbols of $w'\in \{0,1\}^{n+1}$ coincide with $w$, which
corresponds to the conditions (\ref{t.technical}-\ref{i.t1}) and (\ref{t.technical}-\ref{i.t5}) respectively.
One can also require that the sets $A_w$ for all $w\in \{0,1\}^n$ corresponding to a fixed integer $n$
are pairwise disjoint. For each $w\in \{0,1\}^\NN$, we denote by $w_n$ the sequence of first $n$
symbols of $w$. Then for each $\omega$ the sequence $(A_{w_n},R_{w_n},p_{w_n})$ satisfies
the hypotheses of Theorem~\ref{t.technical}
(note that the condition (\ref{t.viral}-\ref{i.t6}) can be established easily).
Hence, the intersections $\cC_w=\bigcap A_{w_n}$ for different sequences $w$ are pairwise disjoint aperiodic classes 
satisfying the conclusion of Proposition~\ref{p.intersection}.
Since there are uncountably many such sequences, Remark~\ref{r.conceptual} follows.

\section{A counterexample in dimension $2$}\label{s.pugh}

In this section, we prove Theorem~\ref{t.pugh}:  we give an example of a diffeomorphism of surface 
with a hyperbolic measure such that each point in the support has trivial stable and unstable sets. 

We start from a diffeomorphism $f$ on a compact two dimensional manifold $M$ such that: 
\begin{itemize}
\item[(H-1)] There exists a hyperbolic periodic saddle $p$ with $J(\cO(p))=0$, $\chi^{+}(p) >1$ and $\chi^{-}(p) <-1$.
\item[(H-2)] There exists $x \in W^u(p) \cap W^s(p)$ at which $T_xW^u(p) = T_xW^s(p)$.
\end{itemize}
It is not difficult to construct such diffeomorphism on any surface.

For such $f$, we prove the following:
\begin{prop}\label{p.pugh}
Suppose $f$ satisfies (H-1) and (H-2). Then, for any $C^1$-neighborhood $\cU$ of $f$,
for any neighborhood $U$ of $\cO(p)\cup \cO(x)$, 
for every $\delta>0$
and $m_0\geq 1$, there exists $g\in \cU$, points $p',x'$ and two compact regions $R\subset A \subset U$
such that the following holds.
\begin{itemize}

\item The points $p', x'\in \mathrm{Int}(R)$ satisfy the conditions (H-1) and (H-2) and $p$ has period $m\geq m_0$;

\item $A$ is a disjoint union $A = \coprod D_i$ of $m$ disks of diameter smaller than $\delta$ such that 
$g(D_i) \subset \mathrm{Int}(D_{i+1})$ (that is, $A$ is an attracting region). 

\item $R$ is a disjoint union $R = \coprod E_i$ of $m$ disks such that 
$g^{-1}(E_i) \subset \mathrm{Int}(E_{i-1})$ (that is, $R$ is a repelling region).
\end{itemize}
\end{prop}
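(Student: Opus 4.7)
The plan is to adapt the argument of Theorems~\ref{t.viral} and~\ref{t.viral2} to the simpler two-dimensional setting. The key simplification is that there is no need for complex eigenvalues or heterodimensional cycles: the lack of dominated splitting comes directly from the tangency at $x$, and in dimension two, ``lack of domination plus $J=0$'' already lets one turn a saddle into either a sink or a source by $C^1$-small perturbations. I would fix, at the outset, integers $k,\ell$ such that any periodic saddle of period $\geq \ell$ without $k$-dominated splitting can be turned into a sink, into a source, or given a homoclinic tangency, by a $C^1$-perturbation staying in $\cU$.

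First, I would unfold the tangency at $x$: by an arbitrarily $C^1$-small perturbation $f_1$ of $f$, produce a horseshoe $\Lambda\subset U$ containing $p$ and Hausdorff-close to $\cO(p)\cup\cO(x)$. Since dominated splittings pass to Hausdorff limits and $\cO(p)\cup\cO(x)$ admits none (by (H-2)), $\Lambda$ has no $k$-dominated splitting. Using \cite[Lemma 4.1]{BCDG} applied to $f_1$, select a periodic point $p'\in \Lambda$ with period $m\geq\max(m_0,\ell)$ whose orbit is Hausdorff-close to $\Lambda$ and whose Lyapunov exponents are close to those of $p$; in particular $\chi^+(p')>1$, $\chi^-(p')<-1$, and $\cO(p')$ has no $k$-dominated splitting. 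A Franks-lemma perturbation (acting on the differential along $\cO(p')$ by a homothety close to the identity) then makes $Jf_1(\cO(p'))=0$ exactly, while preserving these exponent estimates and the lack of $k$-domination.

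Second, I would carry out the ``separation'' step as in Step~II of the proofs of Theorems~\ref{t.viral} and~\ref{t.viral2}. Because $\cO(p')$ has no $k$-domination and $Jf_1(\cO(p'))=0$, one can perturb $p'$ into a sink by an arbitrarily $C^1$-small perturbation supported in an arbitrarily small neighborhood of $\cO(p')$, creating an attracting region $A=\coprod_{i=0}^{m-1}D_i\subset U$ of $m$ disks of diameter less than $\delta$. A further perturbation with still smaller support restores the saddle differential along $\cO(p')$; the topological property $f(A)\subset\mathrm{Int}(A)$ is preserved because it is $C^1$-open. Iterate this procedure inside $A$: perturb $p'$ to a source to produce a repelling region $R=\coprod_{i=0}^{m-1}E_i\subset A$ of $m$ disks, then recover the saddle differential once more. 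Call the resulting map $f_2$; it has $p'$ as a saddle with the Lyapunov exponents produced in Step~1, $A$ attracting, $R\subset A$ repelling, and $p'\in \mathrm{Int}(R)$. Finally, invoking Remark~\ref{r.gou} (based on \cite[Theorem~3.1]{Go} and \cite[Theorem~8]{Go2}), the persisting lack of $k$-domination along $\cO(p')$ yields an arbitrarily $C^1$-small perturbation $g\in\cU$, supported in a small neighborhood of $\cO(p')$ inside $R$ and preserving the differential along $\cO(p')$, that creates a homoclinic tangency $x'\in W^u(p')\cap W^s(p')$. The regions $A$ and $R$ persist because the support is small, and $(p',x')$ lies in $\mathrm{Int}(R)$ and satisfies (H-1)--(H-2) for $g$.

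The main technical obstacle is the careful bookkeeping in the separation step: one must order the four successive perturbations (sink, recover, source, recover) so that the resulting diffeomorphism simultaneously has $A$ attracting, $R\subset A$ repelling, and $p'$ a hyperbolic saddle with the correct differential along $\cO(p')$, and then still leaves room for the final tangency-creating perturbation inside $R$. This is precisely where the normalization $Jf_1(\cO(p'))=0$ is essential, since it is what allows the same orbit to be turned both into a sink and into a source by $C^1$-small perturbations; without it only one of the two sides of the separation could be achieved.
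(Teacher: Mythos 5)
Your proposal is correct and follows essentially the same three-step strategy the paper uses (unfold the tangency at $x$ to get a horseshoe with a high-period orbit $\cO(p')$ lacking $k$-domination; use the lack of domination together with near-zero Jacobian to build nested attracting and repelling regions around $\cO(p')$ while restoring the saddle dynamics near the orbit; create a new tangency $x'$ via Gourmelon's perturbation). The only divergence from the paper is cosmetic: the paper normalizes $J(\cO(p'))=0$ by a final tangency-preserving perturbation at the end of Step III, whereas you impose $J=0$ exactly by a Franks-lemma homothety already at the end of Step I — both are valid since the homothety does not change the stable/unstable subspaces (so it preserves the lack of $k$-domination and the exponent bounds), and the subsequent Gourmelon perturbation preserves the differential along $\cO(p')$, hence keeps $J=0$.
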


Using Proposition~\ref{p.pugh} repeatedly, we can build  a sequence of diffeomorphisms  $C^1$-converging to a diffeomorphism $f_\infty$ 
presenting, as in Theorem~\ref{t.technical}, a nested sequence of periodic attracting/repelling
small disks of period tending to infinity,  containing periodic points 
with Lyapunov exponents greater than $1$ and less than $-1$. As in Proposition~\ref{p.intersection}, the limit is a chain recurrent 
class $\cC$ which is an adding machine, and the semi-continuity of the extremal Lyapunov exponents implies that the unique invariant measure 
is hyperbolic  (with exponents greater than $1$ and less than $-1$); finally
the stable/unstable sets of any point in $\cC$ are trivial.

\begin{proof}[Proof of Proposition~\ref{p.pugh}]
It is essentially the same as the proof of Theorem~\ref{t.viral}.
\medskip

\noindent
{\bf Step I.}  First, we unfold the homoclinic tangency at $x$ .
It produces a hyperbolic basic set (a horseshoe) $K\subset U$ containing $p$ and having a point arbitrarily close to $x$.
As a consequence, there is a hyperbolic periodic point $p'\in K$ with arbitrarily large period,
$J(\cO(p'))$ arbitrarily close to $0$, 
and Lyapunov exponents  
$\chi^+(p')> 1$ and $\chi^-(p')<-1$ and such that the hyperbolic splitting at $p'$ has an arbitrarily small angle: in particular, 
the dominated splitting is arbitrarily weak on $\cO(p')$.

\medskip

\noindent
{\bf Step II.} Using the absence of $k$-domination along $p'$,
and the fact that $J(\cO(p'))$ almost vanishes, we can construct 
the repelling and the attracting region around $p'$ keeping the local dynamics around
$\mathcal{O}(p')$ unchanged (on surfaces the argument goes back to Ma\~n\'e~\cite{mane}).
Note that the size of the regions can be taken arbitrarily small.

\medskip

\noindent
{\bf Step III.} Again, because of the absence of the domination, we can produce a 
point $x'$ of homoclinic tangency associated to $p'$ recovering hypothesis (H-2),
using~\cite{Go}.
A final pertubation preserving the tangency allows to get
 $J(\cO(p'))=0$, keeping the  bounds  $\chi^+> 1$ and $\chi^-<-1$.
We have recovered (H-1), ending the proof.
\end{proof}

\bigskip
\footnotesize
\noindent\textit{Acknowledgments.}
This work was partially supported by 
the Aihara Project (the FIRST program
from JSPS, initiated by CSTP),
the ANR project \emph{DynNonHyp} BLAN08-2 313375
and by the Balzan Research Project of J. Palis.

\vspace{1.5cm}

\begin{itemize}
\item[]  \emph{Christian Bonatti}
\begin{itemize}
\item[] Institut de Math\'ematiques de Bourgogne CNRS - URM 5584
\item[] Universit\'e de Bourgogne Dijon 21004, France
\end{itemize}
\item[] \emph{Sylvain Crovisier}
\begin{itemize}
\item[]  Laboratoire de Math\'ematiques d'Orsay CNRS - UMR 8628
\item[]  Universit\'e Paris-Sud 11  Orsay 91405, France
\end{itemize}
\item[] \emph{Katsutoshi Shinohara}
\begin{itemize}
\item[] FIRST, Aihara Innovative Mathematical Modelling Project, JST,
\item[] Institute of Industrial Science, University of Tokyo, 
\item[] 4-6-1, Komaba, Meguro-ku, Tokyo 153-8505, Japan
\end{itemize}
\end{itemize}

\end{document}